\newcommandx{\todoN}[2][1=]{\todo[linecolor=red,backgroundcolor=white,bordercolor=red,#1]{#2}}			
\newcommandx{\todoA}[2][1=]{\todo[linecolor=green,backgroundcolor=white,bordercolor=green,#1]{#2}}			
\def\namedlabel#1#2{\begingroup
    #2%
    \def\@currentlabel{#2}%
    \phantomsection\label{#1}\endgroup
}
	\tikzstyle{every picture}=[scale=.35,inner sep=0]
\newcommand{\rotsimeq}{\rotatebox[origin=c]{-90}{$\simeq$}}   
\newtheorem{thm}{Theorem}
\theoremstyle{definition}
\theoremstyle{theorem}
\newtheorem{theorem}{Theorem}[section]
\newtheorem{lemma}[theorem]{Lemma}
\newtheorem{proposition}[theorem]{Proposition}
\newtheorem{corollary}[theorem]{Corollary}
\theoremstyle{definition}
\newtheorem{definition}[theorem]{Definition}
\theoremstyle{remark}
\newtheorem{remark}[theorem]{Remark}
\numberwithin{equation}{section}
\begin{document}

\title[Root lattices and invariant series for  plumbed $3$-manifolds]{Root lattices and invariant series\\ for  plumbed $3$-manifolds}

\author[A.H.~Moore]{Allison H.~Moore}
\address{Allison H.~Moore
\newline \indent Department of Mathematics \& Applied Mathematics
\newline \indent Virginia Commonwealth University, Richmond, VA 23284}
\email{moorea14@vcu.edu}

\author[N.~Tarasca]{Nicola Tarasca}
\address{Nicola Tarasca 
\newline \indent Department of Mathematics \& Applied Mathematics
\newline \indent Virginia Commonwealth University, Richmond, VA 23284}
\email{tarascan@vcu.edu}

\subjclass[2020]{
57K31 (primary), 
57K16, 
17B22 
(secondary)}
\keywords{Quantum invariants of 3-manifolds, plumbed $3$-manifolds, root systems, Kostant partition functions, $\mathrm{Spin}^c$-structures, false theta functions}

\begin{abstract}
We study formal series which are invariants of plumbed 3-manifolds twisted by root lattices. 
These series extend the BPS $q$-series $\widehat{Z}(q)$ recently defined in Gukov-Pei-Putrov-Vafa, Gukov-Manolescu, Park, and further refined in Ri. 
We show that the series $\widehat{Z}(q)$ is unique in an appropriate sense and decomposes as the average of related series which are themselves invariant under the five Neumann moves amongst plumbing trees. Explicit computations are presented in the case of Brieskorn spheres and a non-Seifert manifold.
\end{abstract}

\vspace*{-3.6pc}

\maketitle

\vspace{-2.pc}

\section*{Introduction}

An ongoing pursuit in quantum topology revolves around the categorification of Witten-Reshetikhin-Turaev (WRT) invariants for  links and 3-manifolds. Recent progress has been made through a physical definition of a new invariant series for  3-manifolds  
in Gukov-Pei-Putrov-Vafa \cite{gukov2020bps} and Gukov-Manolescu \cite{gukov2021two}. This series, usually denoted as $\widehat{Z}_a(q)$, requires the choice of
 a $\mathrm{Spin}^c$-structure $a$ on the 3-manifolds.
It is expected to arise as the Euler characteristic of a homology theory currently lacking a mathematical definition
 and is expected to converge to the WRT invariants in some appropriate limits \cite{gukov2017fivebranes, gukov2020bps}. 
A mathematical definition of $\widehat{Z}_a(q)$ is currently available only for negative-definite plumbed 3-manifolds \cite{gukov2021two}, and in this case the convergence to the WRT invariants has been recently proven in Murakami \cite{murakami2023proof}.
The series $\widehat{Z}_a(q)$ has been refined to include the datum of an arbitrary \textit{root lattice} by Park  \cite{park2020higher}, with the series from \cite{gukov2020bps, gukov2021two} coinciding with the $A_1$ case.

Here we show that $\widehat{Z}_a(q)$ is unique in an appropriate sense and decomposes as the average of related series which are themselves invariant.
For this, we start by constructing a general series starting from an initial input which includes the data used in \cite{park2020higher}. Namely we start from a reduced and refinable plumbing tree $\Gamma$ (reviewed in \S\S\ref{sec:redGamma} and \ref{sec:ref}), 
a {root lattice} $Q$, and a choice of a generalized $\mathrm{Spin}^c$-structure $a$ (which depends on the root lattice as  reviewed in \S\ref{sec:genspincstr}).
Moreover, we consider  ancillary inputs given by  a choice of a collection $\mathcal{P}$ of formal series and a set 
$S$ of assignments  of elements of the Weyl group to the vertex set of $\Gamma$. 
We denote our resulting series as $\mathsf{Y}\left(q\right)=\mathsf{Y}_{\mathcal{P},S,a}\left(q\right)$ (with $\Gamma$ and $Q$ omitted from the notation). In the statement that follows, $\Xi\subseteq W^{V(\Gamma)}$ is the set defined in \S\ref{sec:chamberass}.


\begin{thm}
\label{thm:finalthmintro}
For a reduced and refinable plumbing tree $\Gamma$, a root lattice $Q$, and a generalized $\mathrm{Spin}^c$-structure $a$, 
there exists a unique finite collection of formal series of type $\mathsf{Y}(q)$ that are invariant under the five Neumann moves amongst reduced plumbing trees. This collection consists of the series
\[
\mathsf{Y}_{S,a}\left(q\right) := \frac{1}{|S|}\sum_{\xi\in S} \mathsf{Y}_{\mathcal{K},\xi,a}\left(q\right) \qquad \mbox{for $S\subseteq \Xi$}
\]
with $\mathsf{Y}_{\Xi,a}\left(q\right) = \widehat{Z}_a(q)$.

If a series $\mathsf{Y}_{S,a}\left(q\right)$ for some $S\subseteq \Xi$ is also invariant under the action of the Weyl group, then $\mathsf{Y}_{S,a}\left(q\right) =  \widehat{Z}_a(q)$.
\end{thm}

The series $\mathsf{Y}_{\mathcal{P},S,a}\left(q\right)$ recovers the series $\widehat{Z}_a(q)$ when $\mathcal{P}$ is what we call the Kostant collection induced from the Kostant partition function of $Q$ as in \S\ref{sec:Kz} and $S=\Xi$.
We find the decomposition 
\[
\widehat{Z}_a(q) = \frac{1}{|\Xi|}\sum_{\xi\in \Xi} \mathsf{Y}_{\mathcal{K},\xi,a}\left(q\right)
\]
with each series $\mathsf{Y}_{\mathcal{K},\xi,a}\left(q\right)$ invariant under the five Neumann moves amongst reduced plumbing trees but not necessarily under the action of $W$.
For $Q=A_1$, the action of $W$ is the usual conjugation of $\mathrm{Spin}^c$-structures.

\smallskip

Our quest was initially inspired by Akhmechet-Johnson-Krushkal \cite{akhmechet2023lattice}, who show that for the  class of \textit{negative-definite plumbing trees} and $Q=A_1$, 
the series $\widehat{Z}_a(q)$ fits in an infinite family of invariant series. 
 As any two negative-definite plumbing tree presentations for a $3$-manifold are related via a series of two Neumann moves (\S\ref{sec:Neumann}), 
proving invariance is equivalent to checking \mbox{invariance} under the two Neumann moves. 
Moreover, the series $\widehat{Z}_a(q)$ is invariant with respect to conjugation of $\mathrm{Spin}^c$-structures. 
For an arbitrary series, the invariance under the two Neumann moves and under the conjugation of $\mathrm{Spin}^c$-structures imposes certain constraints on the series coefficients. 
It is shown in \cite{akhmechet2023lattice} that there are infinitely many solutions to such constraints, with the series $\widehat{Z}_a(q)$ giving only one such example.
Explicit computations in the case of  Brieskorn spheres are presented in \cite{liles2023infinite}.

In pursuit of constructing invariants for more general  3-manifolds, it is desirable to consider inputs from a larger class of plumbing graphs. When considering  more generally the case of arbitrary plumbing trees, any two plumbing tree presentations are related via a series of five Neumann moves, with three extra moves complementing the two moves from the negative-definite case. While the series $\widehat{Z}_a(q)$ (more precisely, a specific refinement presented for $Q=A_1$ in \cite{ri2023refined}) remains invariant (when it is well-defined, i.e., the plumbing tree is refinable as in \S\ref{sec:ref}) \cite{gukov2021two, park2020higher}, it is natural to ask 
whether $\widehat{Z}_a(q)$ is unique in this regard. In other words, we ask 
how many of the modifications of $\widehat{Z}_a(q)$ \`a la \cite{akhmechet2023lattice}
 satisfy the constraints given by the five Neumann moves and the action of the Weyl group on generalized $\mathrm{Spin}^c$-structures and thus remain invariant more generally for plumbing trees.

The answer to this question requires a study of the collections $\mathcal{P}$ used as input. 
Two key properties for such $\mathcal{P}$ emerge, referred to as admissibility and symmetry. 
We show that: 

\begin{thm}
\label{thm:uniquenessadm}
For a root lattice $Q$:
\begin{enumerate}[(i)]

\item There are only finitely many admissible collections, explicitly given in \eqref{eq:Padm}.

\item The Kostant collection is the unique admissible and symmetric collection, up to the action of the Weyl group $W$ \eqref{eq:WonP}.
\end{enumerate}
\end{thm}

Finally, we show that the invariance of the series $\mathsf{Y}(q)$ under the five Neumann moves is equivalent to $\mathcal{P}$ being admissible and $S\subseteq \Xi$. Additionally, the invariance of the series $\mathsf{Y}(q)$ also under the action of $W$ is equivalent to $\mathcal{P}$ being admissible and symmetric and $S= \Xi$ (see Theorem \ref{thm:qseriesinvariance}).
Thus we deduce Theorem \ref{thm:finalthmintro} from Theorem \ref{thm:uniquenessadm}.
%
%

Explicit computations in the case of Brieskorn spheres and a non-Seifert manifold are presented in \S\ref{sec:examples}.
In the former case, we show that  the series $\mathsf{Y}_{\mathcal{P},S,a}\left(q\right)$ for all $S\subseteq \Xi$ are identical and equal to $\widehat{Z}_a(q)$  for all root lattices, while in the latter case we show that the series $\widehat{Z}(q)$ for $Q=A_1$ decomposes as the average of two distinct invariant series of type $\mathsf{Y}\left(q\right)$.

We will present analogous results for plumbed knot complements and a gluing formula in the forthcoming \cite{MT2}. Additionally, we will show there how to remove the assumption that $\Gamma$ be refinable, at the expense of introducing the dependence of the series on a new variable.

\subsection*{Structure of the paper}
Root lattices, plumbings, Neumann moves, and $\mathrm{Spin}^c$-structures are reviewed in \S\ref{sec:background}. 
We define and study the admissible collections $\mathcal{P}$ in  \S\ref{sec:admser} 
and prove Theorem \ref{thm:uniquenessadm} there.
 The series $\mathsf{Y}\left(q\right)$ is defined in \S\ref{sec:qseries}.
For an admissible $\mathcal{P}$ and $S\subseteq \Xi$, the invariance of $\mathsf{Y}\left(q\right)$ is proved in \S\ref{sec:proofinv}.
The proof of Theorem \ref{thm:finalthmintro} is in \S\ref{sec:strong}.
Explicit computations are presented in \S\ref{sec:examples}.


\section{Notation and background}
\label{sec:background}

Here we review the necessary background on root lattices, plumbed $3$-manifolds and their homology, Neumann moves, reduced plumbing trees, and generalized  $\mathrm{Spin}^c$-structures. 

\subsection{Root lattices}
\label{sec:RootL}
We start by reviewing some basic  facts on root lattices that will be used throughout; we refer to \cite{MR1890629, MR0323842} for more details.

A \textit{root system} is a pair $(V, \Delta)$ where $V$ is a finite-dimensional Euclidean space over $\mathbb{R}$ with a positive-definite bilinear form $\langle , \rangle$, and 
$\Delta\subset V$ is a  finite subset of non-zero vectors, called \textit{roots}, such that:
\begin{enumerate}[(i)]
\item $\mathbb{R}\Delta=V$;
\item for $\alpha\in \Delta$, one has $n\alpha\in\Delta$ if and only if $n=\pm 1$;
\item $\Delta$ is closed under the reflections through the hyperplanes orthogonal to the roots; and 
\item for $\alpha,\beta\in \Delta$, one has $2\frac{\langle \alpha, \beta \rangle}{\langle \alpha, \alpha \rangle}\in \mathbb{Z}$.
\end{enumerate}

Let $Q$ be a \textit{root lattice}, that is, $Q=\mathbb{Z}\Delta$ for some root system $(V, \Delta)$. We will denote its rank as $r:=\mathrm{rank}(Q)$.
The corresponding \textit{weight lattice} $P$ is defined as
\[
P:= \left\{\lambda\in V \,\,\Bigg|\,\, 2\frac{\langle \lambda,\alpha\rangle}{\langle \alpha,\alpha\rangle}\in\mathbb{Z} \mbox{ for }\alpha\in \Delta \right\}.
\]

Select a set $\Delta^+\subset \Delta$ of \textit{positive roots}. 
This is the set of all roots  lying on a fixed side of a hyperplane in $V$ which does not contain any root.
The \textit{Weyl vector} $\rho\in P\cap \frac{1}{2}Q$ is  defined as the half-sum of the positive roots.

A root $\alpha\in \Delta^+$ is \textit{simple} if $\alpha$ cannot be written as the sum of two elements in~$\Delta^+$.
Simple roots form a basis for $V$. For  simple roots $\alpha_1, \dots, \alpha_r$, the \textit{fundamental weights} $\lambda_1,\dots, \lambda_r$ are elements of $P$ such that $2\frac{\langle \lambda_i,\alpha_j\rangle}{\langle \alpha_j,\alpha_j\rangle}=\delta_{i,j}$ for $i,j=1,\dots,r$. These also form a basis of $V$.

Let $W$ be the \textit{Weyl group} acting on $Q$. This is the group generated by reflections through the hyperplanes orthogonal to the roots.
For $w\in W$, the \textit{length} $\ell(w)$ of $w$ is defined as the minimum length of any expression of $w$ as product of such reflections.
This is also equal to the number of positive roots transformed by $w$ into negative roots.

Root lattices are classified by Dynkin diagrams. As an example, the root lattice  $Q=A_1$ is $\mathbb{Z}$ with bilinear form $\langle m,n\rangle = 2mn$ for $m,n\in\mathbb{Z}$. In this case, $\rho=\frac{1}{2}$ and $W=\mathbb{S}_2$ (the symmetric group on a set of size $2$). 

More generally, it will be convenient to have the following example in mind.
The root lattice  $Q=A_2$ is $\mathbb{Z}\alpha\oplus \mathbb{Z}\beta$ with $\langle \alpha,\alpha\rangle = \langle \beta,\beta\rangle=2$ and $\langle \alpha,\beta\rangle=-1$. In particular, the angle between $
\alpha$ and $\beta$ is $120^\circ$, and $Q$ is the vertex arrangement of the tiling of the Euclidean plane by equilateral triangles.
 In this case, 
$\Delta=\{\pm\alpha, \pm\beta,\pm(\alpha+\beta)\}$ and $W=\mathbb{S}_3$. 
For the set of positive roots $\Delta^+=\{\alpha, \beta,\alpha+\beta\}$, the Weyl vector is $\rho=\alpha+\beta$.

\subsection{Plumbings}
We will consider closed oriented $3$-manifolds that arise from the plumbing construction. Here we sketch the construction and set the notation; we refer to \cite{neumann1981calculus} and \cite[\S3.3]{MR4510934} for details. 

One starts from a plumbing graph $\Gamma$. This consists of a graph with some decorations: for each vertex, one has two integer numbers (called the \textit{Euler number} and the \textit{genus} of the vertex), and for each edge, one has a  sign. 
We assume throughout that $\Gamma$ is a \textit{tree} and that the genus of each vertex is \textit{zero}. Since $\Gamma$ has no cycles, one can assume that the sign on all edges is $+1$ \cite{neumann1981calculus} (but it will be beneficial to remember that edge signs can change; we will return to this in \S\S\ref{sec:Neumann}-\ref{sec:genspincstr}). Hence, for our plumbing trees we will only record the Euler number $m_v$ for each vertex $v$.

For a plumbing tree $\Gamma$, let $V(\Gamma)$ be its vertex set and $E(\Gamma)$  its edge set.
Choose an ordering of its vertices $v_1, \dots, v_s$, with $s=|V(\Gamma)|$, and let $m_1, \dots, m_s$ be the corresponding Euler numbers. An edge between vertices $v_i$ and $v_j$ will be denoted by $(i,j)\in E(\Gamma)$.

The \textit{framing matrix} $B$ determined by $\Gamma$ is the $s\times s$ symmetric matrix 
\[
B:= (B_{ij})_{i,j=1}^s \quad \mbox{ with }\quad
B_{ij}:=
\left\{
\begin{array}{ll}
m_i & \mbox{if $i=j$,}\\[5pt]
1 & \mbox{if $i\neq j$ and $(i, j)\in E(\Gamma)$,}\\[5pt]
0 & \mbox{otherwise.}
\end{array}
\right.
\]
(More generally, the entries $B_{ij}$ corresponding to the edges are defined to be equal to the edge signs.)
We denote by $\sigma = \sigma(B)$  the signature of $B$ and  $\pi= \pi(B)$  the number of its positive eigenvalues. One has $\sigma=2\pi-s$.

For the plumbing construction, one starts by assigning to each vertex $v$ of $\Gamma$ an oriented disk bundle over a real surface $E_v$ of genus equal to the genus decoration of $v$ (i.e., genus $0$ in our case), with the Euler number of the  bundle equal to  $m_v$.
Then one constructs a  $4$-manifold  $X=X(\Gamma)$ by gluing together such bundles according to the edge set $E(\Gamma)$.
Let ${M}={M}(\Gamma)$ be the boundary of $X$. This is a closed oriented $3$-manifold, called the \textit{plumbed $3$-manifold} constructed from $\Gamma$.
Alternatively, $M$ may be obtained by Dehn surgery on a framed link determined by $\Gamma$ consisting of unknots corresponding to the vertices of $\Gamma$, framings given by the corresponding Euler numbers, and with two unknots forming an Hopf link whenever the corresponding vertices in $\Gamma$ are joined by an edge.

\subsection{On the homology of the plumbed $3$-manifold}
\label{sec:homplumb}
The $4$-manifold $X$ has the same homotopy type of the space $E$ of the $s$ real surfaces $E_v$, i.e., $H_i\left( X; \mathbb{Z} \right)\cong H_i\left( E; \mathbb{Z} \right)$ for $i\geq 0$.
The homology of the $3$-manifold $M$ follows from Lefschetz duality, the Universal Coefficient Theorem, and the long exact sequence of the pair $(X,M)$.

Specifically, let 
\[
L:= H_2\left( X; \mathbb{Z} \right) \cong H_2\left( E; \mathbb{Z} \right)\cong \mathbb{Z}^s.
\]
 The last isomorphism is induced from the choice of an ordering of the real surfaces $E_v$ (or equivalently, the vertices of $\Gamma$). The natural  intersection pairing of $L\cong \mathbb{Z}^s$ is given by the framing matrix $B$.

By Lefschetz duality and the Universal Coefficient Theorem, the dual of the lattice $L$ is 
\[
L' = H^2\left( X; \mathbb{Z} \right)\cong H_2\left( X, {M}; \mathbb{Z} \right)\cong \mathbb{Z}^s.
\]
This is generated by the transversal disks $D_v$ to the surfaces $E_v$ at general points. Hence, the natural map $L\rightarrow L'$  in the bases $\{E_v\}_v$ and $\{D_v\}_v$ is given by the framing matrix $B$. 

In the following, we  assume that the pairing of $L$ is non-degenerate, i.e., $\det (B)\neq 0$. In this case, one has an inclusion of lattices $B\colon L\hookrightarrow L'$.
From the long exact sequence of the pair $(X,M)$, 
the boundary operator $L' \cong H_2\left( X, {M}; \mathbb{Z} \right) \rightarrow H_1\left( M; \mathbb{Z} \right)$ yields a short exact sequence
\[
L'/BL \hookrightarrow H_1\left( M; \mathbb{Z} \right) \twoheadrightarrow H_1\left( X; \mathbb{Z} \right).
\]
From the assumption that $\Gamma$ is a tree and that all surfaces $E_v$ have genus $0$, we deduce the vanishing $H_1\left( X; \mathbb{Z} \right) \cong H_1\left( E; \mathbb{Z} \right)\cong 0$. Hence, one has
\[
H_1\left( M; \mathbb{Z} \right) \cong L'/BL \cong \mathbb{Z}^s/B\mathbb{Z}^s.
\]
In particular, our assumptions imply that $M$ is a rational homology sphere, i.e., $H_1\left( M; \mathbb{Q} \right)=0$.

When $\det (B)\neq 0$, the framing matrix $B$ is invertible over $\mathbb{Q}$, and
the induced bilinear pairing on $L'$ is 
\[
\langle, \rangle \colon L' \times L' \rightarrow \mathbb{Q}, \qquad (v, w) \mapsto v^t B^{-1}w.
\]
This pairing is induced from $B$ since for $x,y\in L$, one has $\langle Bx, By \rangle = x^t B y$, thus recovering the pairing of $x$ and $y$ in $L$.

For a root lattice $Q$  and a lattice $L'$ as above, the induced bilinear pairing on $L'\otimes_{\mathbb{Z}} Q$ is defined by
\[
\langle, \rangle \colon L'\otimes_{\mathbb{Z}} Q \times L'\otimes_{\mathbb{Z}} Q \rightarrow \mathbb{Q}, \qquad (v\otimes \alpha, w\otimes \beta) \mapsto \langle v, w \rangle\, \langle \alpha, \beta \rangle.
\]
Here the pairing $ \langle v, w \rangle$ is in $L'$ and the pairing $\langle \alpha, \beta \rangle$ is in $Q$.
This extends by linearity as follows. For $a,b\in L'\otimes_{\mathbb{Z}} Q\cong \mathbb{Z}^s\otimes_{\mathbb{Z}} Q$, write $a=(a_1,\dots, a_s)$ and $b=(b_1,\dots, b_s)$, with $a_i,b_i\in Q$, for $i=1,\dots,s$.
Then the pairing is
\begin{equation}
\label{eq:pairingL'Q}
\langle a, b \rangle = \sum_{i=1}^s \sum_{j=1}^s \left( B^{-1}\right)_{ij} \langle a_i, b_j\rangle.
\end{equation}

\subsection{Refinable plumbings}
\label{sec:ref}
A plumbing tree $\Gamma$  is \textit{negative definite} if the framing matrix $B$ is negative definite.

A plumbing tree $\Gamma$ is \textit{weakly negative definite} if the framing matrix $B$ is invertible over $\mathbb{Q}$ and 
$B^{-1}$ is negative definite
 on the subspace of $\mathbb{Z}^s$ spanned by the vertices of $\Gamma$ of degree at least $3$. If $\Gamma$ has no vertex of degree at least $3$, then one simply requires that $B$ be invertible over $\mathbb{Q}$.

A plumbed $3$-manifold $M$ is \textit{negative definite} (respectively, \textit{weakly negative definite}) if $M$ may be constructed from some negative-definite (resp., weakly negative-definite) plumbing tree, up to an orientation preserving homeomorphism. 

One defines a plumbing tree or a plumbed $3$-manifold to be \textit{weakly positive definite} similarly.

We say that a plumbing tree $\Gamma$ is \textit{refinable} if up to Neumann moves, $\Gamma$ is either weakly negative definite or weakly positive definite. 

All weakly negative-definite plumbed $3$-manifolds are in fact negative definite, that is,  every weakly negative-definite plumbing tree can be transformed into a negative definite one by a sequence of Neumann moves \cite[Thm A.1]{harichurn2025delta}. 
In particular, all such $3$-manifolds arise as links of isolated singularities on complex normal surfaces, up to an orientation preserving homeomorphism.

\begin{figure}[t]
\hspace{-0.8cm}
\begin{subfigure}[t]{0.26\textwidth}
\[
\begin{array}{c}
\begin{tikzpicture}[baseline={([yshift=0ex]current bounding box.center)}]
      \path(0,0) ellipse (2 and 2);
      \tikzstyle{level 1}=[counterclockwise from=150,level distance=15mm,sibling angle=30]
      \node [draw,circle, fill, inner sep=1.5, label={[label distance=10]90:$m_1+\epsilon1$}] (A1) at (180:4) {}
            child {node [label=: {}]{}}
	    child {node [label=: {}]{}}
    	    child {node [label=: {}]{}};
      \tikzstyle{level 1}=[counterclockwise from=30,level distance=15mm,sibling angle=-60]
      \node [draw,circle,fill, inner sep=1.5, label={[label distance=10]90:$m_2+\epsilon1$}] (A2) at (0:4) {}
	   child {node [label=: {}]{}}
	   child {node [label=: {}]{}};
      \tikzstyle{level 1}=[counterclockwise from=150,level distance=15mm,sibling angle=30]
      \node [draw,circle, fill, inner sep=1.5, label={[label distance=10]90:$\epsilon1$}] (A0) at (0:0) {};
      \path (A1) edge []  node[midway, label={[label distance=10]-90:}]{} (A0);
       \path (A2) edge []  node[midway, label={[label distance=10]-90:}]{} (A0);
    \end{tikzpicture}
\\
\rotsimeq
    \\
\begin{tikzpicture}[baseline={([yshift=0ex]current bounding box.center)}]
      \path(0,0) ellipse (2 and 2.5);
      \tikzstyle{level 1}=[counterclockwise from=150,level distance=15mm,sibling angle=30]
      \node [draw,circle, fill, inner sep=1.5, label={[label distance=8]90:$m_1$}] (A1) at (180:2) {}
            child {node [label=: {}]{}}
	    child {node [label=: {}]{}}
    	    child {node [label=: {}]{}};
      \tikzstyle{level 1}=[counterclockwise from=30,level distance=15mm,sibling angle=-60]
      \node [draw,circle,fill, inner sep=1.5, label={[label distance=8]90:$m_2$}] (A2) at (0:2) {}
	   child {node [label=: {}]{}}
	   child {node [label=: {}]{}};
      \path (A1) edge []  node[midway, label={[label distance=10]-90:}]{} (A2);
    \end{tikzpicture}
\end{array}
\]
\caption*{(A$\epsilon$)}
\end{subfigure}
\qquad\quad
\begin{subfigure}[t]{0.26\textwidth}
\[
\begin{array}{c}
\begin{tikzpicture}[baseline={([yshift=0ex]current bounding box.center)}]
      \path(0,0) ellipse (2 and 2);
      \tikzstyle{level 1}=[counterclockwise from=150,level distance=15mm,sibling angle=30]
      \node [draw,circle, fill, inner sep=1.5, label={[label distance=10]90:$m_1+\epsilon1$}] (A1) at (180:4) {}
            child {node [label=: {}]{}}
	    child {node [label=: {}]{}}
    	    child {node [label=: {}]{}};
      \tikzstyle{level 1}=[counterclockwise from=150,level distance=15mm,sibling angle=30]
      \node [draw,circle, fill, inner sep=1.5, label={[label distance=10]90:$\epsilon1$}] (A0) at (0:0) {};
      \path (A1) edge []  node[midway, label={[label distance=10]-90:}]{} (A0);
    \end{tikzpicture}
\\
\rotsimeq
    \\
\begin{tikzpicture}[baseline={([yshift=0ex]current bounding box.center)}]
      \path(0,0) ellipse (2 and 2.5);
      \tikzstyle{level 1}=[counterclockwise from=150,level distance=15mm,sibling angle=30]
      \node [draw,circle, fill, inner sep=1.5, label={[label distance=8]90:$m_1$}] (A1) at (0:0) {}
            child {node [label=: {}]{}}
	    child {node [label=: {}]{}}
    	    child {node [label=: {}]{}};
    \end{tikzpicture}
\end{array}
\]
\caption*{(B$\epsilon$)}
\end{subfigure}
\begin{subfigure}[t]{0.26\textwidth}
\[
\begin{array}{c}
\begin{tikzpicture}[baseline={([yshift=0ex]current bounding box.center)}]
      \path(0,0) ellipse (2 and 2);
      \tikzstyle{level 1}=[counterclockwise from=150,level distance=15mm,sibling angle=30]
      \node [draw,circle, fill, inner sep=1.5, label={[label distance=10]90:$m_1$}] (A1) at (180:4) {}
            child {node [label=: {}]{}}
	    child {node [label=: {}]{}}
    	    child {node [label=: {}]{}};
      \tikzstyle{level 1}=[counterclockwise from=30,level distance=15mm,sibling angle=-60]
      \node [draw,circle,fill, inner sep=1.5, label={[label distance=10]90:$m_2$}] (A2) at (0:4) {}
	   child {node [label={[label distance=5]0:}]{}}
	   child {node [label={[label distance=5]0:}]{}};
      \tikzstyle{level 1}=[counterclockwise from=150,level distance=15mm,sibling angle=30]
      \node [draw,circle, fill, inner sep=1.5, label={[label distance=10]90:$0$}] (A0) at (0:0) {};
      \path (A1) edge []  node[midway, label={[label distance=10]-90:}]{} (A0);
       \path (A2) edge []  node[midway, label={[label distance=10]-90:}]{} (A0);
    \end{tikzpicture}
\\
\rotsimeq
    \\
\begin{tikzpicture}[baseline={([yshift=0ex]current bounding box.center)}]
      \path(0,0) ellipse (2 and 2.5);
      \tikzstyle{level 1}=[counterclockwise from=150,level distance=15mm,sibling angle=30]
      \node [draw,circle, fill, inner sep=1.5, label={[label distance=10]90:$m_1+m_2$}] (A1) at (0:0) {}
            child {node [label=: {}]{}}
	    child {node [label=: {}]{}}
    	    child {node [label=: {}]{}};
      \tikzstyle{level 1}=[counterclockwise from=30,level distance=15mm,sibling angle=-60]
      \node [draw,circle,fill, inner sep=1.5, label={[label distance=8]90:}] (A2) at (0:0) {}
	   child {node [label={[label distance=10]0:}]{}}
	   child {node [label={[label distance=10]0:}]{}};
    \end{tikzpicture}
\end{array}
\]
\caption*{(C)}
\end{subfigure}

\caption{The five Neumann moves on plumbing trees. 
Here, \mbox{$\epsilon \in \{+, -\}$.}
}
\label{fig:Neumann}
\end{figure}

\subsection{Neumann moves}
\label{sec:Neumann}
Neumann showed that if two plumbing graphs represent the same $3$-manifold up to orientation-preserving diffeomorphism, then they are related by a finite sequence of combinatorial moves \cite[Thm~3.2]{neumann1981calculus}. The only such moves between two plumbing \textit{trees} are the five moves given in Figure \ref{fig:Neumann} and their inverses.

%

Note that the moves (A$-$) and (B$-$) from Figure \ref{fig:Neumann} preserve the negative-definite property of the plumbing trees.
However, the other three Neumann moves from Figure \ref{fig:Neumann}  do not necessarily preserve the weakly negative-definite property of the plumbing trees, see \cite[Ex.~4.2]{ri2023refined}. In particular, a plumbing tree for a weakly negative-definite plumbed $3$-manifold may not necessarily be weakly negative definite, but it may become so after a sequence of the Neumann moves  from Figure \ref{fig:Neumann}.

We emphasize that orientation preserving homeomorphisms between pairs consisting of a plumbed $3$-manifold and a (generalized) $\mathrm{Spin}^c$-structure have not been classified yet, see \cite[Appendix A]{akhmechet2024knot}. 
Also, while the existence of an orientation preserving homeomorphism between $3$-manifolds constructed from plumbing \textit{forests} is established in \cite{neumann2006invariant}, no such result is known when one reduces to plumbing trees alone. We will content ourselves with verifying invariance under the Neumann moves  from Figure \ref{fig:Neumann} and the action of the Weyl group.

\begin{remark}
\label{rmk:columnspace}
For each move, we will use the following observation about the framing matrices. 
Let $\Gamma$ and $\Gamma_\circ$ be the bottom and top plumbing trees, respectively, and let $B$ and $B_\circ$ be the corresponding framing matrices. 
A direct computation shows that the column space of $B$ is isomorphic to a subspace of the column space of $B_\circ$.  
We show this in the case of move (A$+$), with the case of the other moves being similar. 
Assume first that $\Gamma$ has only two vertices. Then $B$ and $B_\circ$ are
\[
B= \left( 
\begin{array}{cc}
m_1 & 1\\
1 & m_2
\end{array}
\right)
\qquad \mbox{and} \qquad
B_\circ= \left( 
\begin{array}{ccc}
m_1+1 & 1 & 0\\
1 & 1 & -1\\
0 & -1 & m_2+1
\end{array}
\right).
\]
After a column operation, $B_\circ$ becomes
\[
\left( 
\begin{array}{ccc}
m_1 & 1 & 1\\
0 & 1 & 0\\
1 & -1 & m_2
\end{array}
\right)
\]
from which it is clear that the column space of $B$ is isomorphic to a subspace of the column space of $B_\circ$.
Note that the negative edge signs appearing as the two coefficients $-1$ of $B_\circ$ can be undone after changing the orientation of the subspace spanned by the vertex labelled by $m_2$. Indeed, this entails multiplying by $-1$ the last row and last column of $B_\circ$.
For the case when $\Gamma$ has more vertices, a similar block form argument applies. 
\end{remark}

\begin{remark}
A composition of Neumann moves does not necessarily preserve the orientation of the $3$-manifold. For instance, consider the plumbing tree for the lens space $L(-2,1)$ consisting of a single vertex of weight $-2$. After performing the Neumann move (B$+$) twice, apply move (C) to obtain the plumbing tree for $L(2,1)$ with a single vertex of weight $2$.
This is an instance of orientation-reversing self-homeomorphisms of lens spaces, which are classified in \cite{reidemeister1935homotopieringe}, see also \cite[Thm A.4.24]{przytycki2023lectures}.
\end{remark}

\subsection{Generalized $\mathrm{Spin}^c$-structures}
\label{sec:genspincstr}

Here we review the space of generalized $\mathrm{Spin}^c$-structures for a plumbed $3$-manifold $M$ and a root lattice $Q$. This space appeared in \cite{park2020higher}.
A generalized $\mathrm{Spin}^c$-structure will be an input of the $q$-series defined in~\S\ref{sec:qseries}.

Recall from \S\ref{sec:homplumb} that the choice of an ordering of the vertices of $\Gamma$ induces an isomorphism $L'\cong \mathbb{Z}^s$.
Let
\[
T_E := \left(2-\deg(v_1), \dots, 2-\deg(v_s) \right) \in \mathbb{Z}^{s} \cong L' 
\]
and
\begin{equation}
\label{eq:delta}
\delta:= T_E \otimes 2\rho \in L' \otimes_{\mathbb{Z}} Q
\end{equation}
where $\rho$ is the Weyl vector  as in \S\ref{sec:RootL}.

The \textit{space of generalized $\mathrm{Spin}^c$-structures} on $M$ for the root lattice $Q$ is
\begin{equation}
\label{eq:BQM}
\mathsf{B}_Q(M) := \frac{\delta+2L' \otimes_{\mathbb{Z}} Q}{2BL\otimes_{\mathbb{Z}} Q}.
\end{equation}
For $Q=A_1$, this is simply 
\[
\mathrm{Spin}^c(M) \cong \frac{\delta+2L' }{2BL},
\]
the space of $\mathrm{Spin}^c$-structures on~$M$ \cite[\S 6.10]{MR4510934}.
Thus $\mathsf{B}_Q(M)$ generalizes the space of $\mathrm{Spin}^c$-structures for an arbitrary root lattice $Q$. As for the case $Q=A_1$, the space $\mathsf{B}_Q(M)$ is affinely isomorphic to
\begin{equation}
\label{eq:H1MQ}
H_1(M; Q)\cong \frac{L' \otimes_{\mathbb{Z}} Q}{BL\otimes_{\mathbb{Z}} Q}.
\end{equation}
The Weyl group $W$ naturally acts component-wise on $L' \otimes_{\mathbb{Z}} Q$, and this induces an action of $W$ on $\mathsf{B}_Q(M)$:
\[
w\colon \mathsf{B}_Q(M) \rightarrow \mathsf{B}_Q(M), \qquad [a]\mapsto [w(a)], \qquad \mbox{for $w\in W$}.
\]

\begin{proposition}
\label{prop:BQMinvariance}
For a plumbed $3$-manifold $M$ and a root lattice $Q$,
the set $\mathsf{B}_Q(M)$  and the Weyl group action on it are invariant under the Neumann moves in Figure \ref{fig:Neumann}.
\end{proposition}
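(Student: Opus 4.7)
The plan is to verify invariance under each of the five Neumann moves from Figure \ref{fig:Neumann}, together with vertex relabelings and edge-sign changes (the latter two only permute or sign-flip coordinates in $L'\otimes_{\mathbb{Z}}Q$ and automatically commute with the $W$-action, since $W$ acts only on the $Q$ factor). For a given Neumann move, let $\Gamma,\Gamma_\circ$ be the bottom and top trees with framing matrices $B,B_\circ$, lattices $L,L',L_\circ,L'_\circ$, degree vectors $T_E\in L'$, $T_{E,\circ}\in L'_\circ$, and $\delta=T_E\otimes 2\rho$, $\delta_\circ=T_{E,\circ}\otimes 2\rho$. Extending Remark \ref{rmk:columnspace} to each move (as the remark itself asserts), one obtains a linear injection $\phi\colon L'\hookrightarrow L'_\circ$ satisfying $\phi(BL)\subseteq B_\circ L_\circ$ and the primitivity $\phi^{-1}(B_\circ L_\circ)=BL$.

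Set $\phi_Q:=\phi\otimes \mathrm{id}_Q$ and $c:=(T_{E,\circ}-\phi(T_E))\otimes 2\rho$, and consider the affine map $\tilde\phi_Q(x):=\phi_Q(x)+c$. The key case-by-case verification is that $T_{E,\circ}-\phi(T_E)\in B_\circ L_\circ$, from which $c\in B_\circ L_\circ\otimes Q$. A direct degree computation gives $T_{E,\circ}-\phi(T_E)=0$ for moves (A$\pm$) and $\pm B_\circ e_{v_0}$ for moves (B$\pm$) (reflecting that the parent vertex's degree increases by one), while the case of move (C) follows once $\phi$ is chosen so that the merged vertex corresponds to an appropriate combination of $v_1,v_2,v_0$. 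Given this, $\tilde\phi_Q$ sends the coset $\delta+2L'\otimes Q$ into $\delta_\circ+2L'_\circ\otimes Q$, and $\phi_Q$ sends $2BL\otimes Q$ into $2B_\circ L_\circ\otimes Q$, so $\tilde\phi_Q$ descends to a well-defined map $\bar\phi_Q\colon \mathsf{B}_Q(M(\Gamma))\to \mathsf{B}_Q(M(\Gamma_\circ))$. Bijectivity follows from injectivity (via the primitivity $\phi^{-1}(B_\circ L_\circ)=BL$) combined with a cardinality count from the affine isomorphism $\mathsf{B}_Q(M)\cong H_1(M;Q)$ in \eqref{eq:H1MQ}, the right-hand side being a topological invariant of $M$.

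For $W$-equivariance, $\phi_Q$ commutes with the component-wise $W$-action, and $w(c)-c=2(T_{E,\circ}-\phi(T_E))\otimes(w(\rho)-\rho)\in 2B_\circ L_\circ\otimes Q$ since $w(\rho)-\rho\in Q$ and $T_{E,\circ}-\phi(T_E)\in B_\circ L_\circ$; hence $\tilde\phi_Q$ is $W$-equivariant modulo $2B_\circ L_\circ\otimes Q$, so $\bar\phi_Q$ is $W$-equivariant. The main obstacle is the verification in move (C), where $\phi$ is not simply a coordinate insertion but must identify the merged vertex with a combination of $v_1,v_2,v_0$, and checking both $T_{E,\circ}-\phi(T_E)\in B_\circ L_\circ$ and the primitivity $\phi^{-1}(B_\circ L_\circ)=BL$ requires careful bookkeeping of the subtree structure on either side of $v_0$. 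The use of the affine correction $c\in B_\circ L_\circ\otimes Q$, rather than attempting to arrange $\phi_Q(\delta)\equiv\delta_\circ\pmod{2L'_\circ\otimes Q}$ directly, is essential: for $Q=A_1$ in moves (B$\pm$) one already has $\phi(T_E)\not\equiv T_{E,\circ}\pmod{2L'_\circ}$, so the more naive congruence fails even in the simplest case.
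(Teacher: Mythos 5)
Your overall strategy---verifying invariance move by move via an affine injection between the $\mathrm{Spin}^c$-cosets---is the same as the paper's (Proposition \ref{prop:BQMinvariance2}), and your surjectivity-by-cardinality argument via \eqref{eq:H1MQ} is a legitimate substitute for the paper's column-space analysis. But your key verification fails for move (B$+$). There a degree count gives $T_{E,\circ}-\phi(T_E)=(0,\dots,0,-1,1)$ in the coordinates of the parent vertex and the new leaf, whereas $B_\circ e_{v_0}=(0,\dots,0,1,\epsilon)$; these agree up to sign only when $\epsilon=-1$. For $\epsilon=+1$ the vector $(0,\dots,0,-1,1)$ is in general \emph{not} in $B_\circ L_\circ$: take $\Gamma$ a single vertex with $m_1=-3$, so that $B_\circ=\left(\begin{smallmatrix}-2&1\\1&1\end{smallmatrix}\right)$ and $B_\circ^{-1}(-1,1)^t=(2/3,1/3)^t$. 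Consequently $w(c)-c\notin 2B_\circ L_\circ\otimes Q$ and your induced map is not $W$-equivariant: for $Q=A_1$ your map is $a\mapsto(a-1,1)$, and $\tilde\phi_Q(\iota(2))-\iota(\tilde\phi_Q(2))=(-3,1)-(-1,-1)=(-2,2)\notin 2B_\circ\mathbb{Z}^2$. No other choice of linear $\phi$ rescues this in the single-vertex model: writing $\phi(1)=(p,q)$, your primitivity requirement forces $\gcd(p-q,m_1)=1$, while $T_{E,\circ}-\phi(T_E)\in B_\circ L_\circ$ forces $m_1\mid 2(p-q)$, which is impossible for $|m_1|>2$.

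The way out, and what the paper does in \eqref{eq:RaB}, is to abandon the exact equality $\tilde\phi_Q(\delta)=\delta_\circ$ and require only the congruence $\tilde\phi_Q(\delta)\equiv\delta_\circ\pmod{2L'_\circ\otimes Q}$, which is condition \eqref{eq:Rdeltaintodelta} and is all that well-definedness of the map on cosets needs. Taking the shift $c=(B_\circ e_{v_0})\otimes 2\rho=(0,\dots,0,1,\epsilon)\otimes 2\rho$, one has $c\in B_\circ L_\circ\otimes Q$ by construction, so the equivariance computation you describe goes through, and the discrepancy $\tilde\phi_Q(\delta)-\delta_\circ=(0,\dots,0,2,\epsilon-1)\otimes 2\rho$ lies in $2L'_\circ\otimes Q$. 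The same principle governs move (C): the paper's shift $(0,\beta,0,\beta,0)$ with $\beta=2\rho$ equals $(B_\circ e_{v'_0})\otimes 2\rho$ for the new degree-two vertex $v'_0$, and the parity condition \eqref{eq:gammadef} is precisely what arranges the mod-$2L'_\circ\otimes Q$ congruence. Your treatment of (C) is too vague to assess as written, but it would need the same correction; the moves (A$\pm$) and (B$-$) of your argument are fine.
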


Proposition \ref{prop:BQMinvariance} follows from the next Proposition \ref{prop:BQMinvariance2}. This will be used in the proof of Theorem~\ref{thm:seriesinvariancef}.

For each move, we use the notation $B\colon L\hookrightarrow L'$ and $\delta$ defined as above for the terms related to the bottom plumbing tree $\Gamma$, and the notation
\mbox{$B_\circ \colon L_\circ \hookrightarrow L_\circ'$} and $\delta_\circ$ for the corresponding terms related to the top plumbing  tree $\Gamma_\circ$.

For each move,  we define a function 
\begin{equation*}
R\colon L'\otimes_{\mathbb{Z}} Q \rightarrow L'_\circ \otimes_{\mathbb{Z}} Q, \qquad a\mapsto R(a)
\end{equation*}
such that the induced map
\begin{equation}
\label{eq:isoBQM}
\frac{\delta+2L' \otimes_{\mathbb{Z}} Q}{2BL\otimes_{\mathbb{Z}} Q} \longrightarrow 
\frac{\delta_\circ+2L_\circ' \otimes_{\mathbb{Z}} Q}{2B_\circ L_\circ\otimes_{\mathbb{Z}} Q}, \qquad
[a] \mapsto [R(a)]
\end{equation}
is a bijection of sets and is equivariant with respect to the action of the Weyl group $W$.
Note that for each move, the column space of $B$ is isomorphic to a subspace of the column space of $B_\circ$, see Remark \ref{rmk:columnspace}. It follows that for each representative $a$ of a generalized $\mathrm{Spin}^c$-structure for $\Gamma$, there is a corresponding affine space of generalized $\mathrm{Spin}^c$-structures for $\Gamma_\circ$, and $R(a)$ is required to be in such a~space.

We proceed by defining the function $R$ for each move. For this, we first choose an order of the vertices of $\Gamma$ and a compatible order of the vertices of $\Gamma_\circ$. This induces isomorphisms $L'\cong \mathbb{Z}^s$ and $L'_\circ\cong \mathbb{Z}^{s_\circ}$, where $s$ and $s_\circ$ are the ranks of $L'$ and $L'_\circ$, respectively. 

\begin{remark}
\label{rmk:leftright}
The choice of an order of the vertices of $\Gamma$ and a compatible order of the vertices of $\Gamma_\circ$ allows one to distinguish for each Neumann move the parts of the tree that are on the left and on the right of each vertex.
\end{remark}

\smallskip

Consider the Neumann move (A$\epsilon$) from Figure \ref{fig:Neumann} with $\epsilon\in\{+,-\}$. 
For $a\in L' \otimes_{\mathbb{Z}} Q$, write $a=(a_1,a_2)$ with subtuple $a_1$ corresponding to the vertices of $\Gamma$ consisting of the vertex labeled by $m_1$ and all vertices on its left, and subtuple $a_2$ corresponding to the vertices of $\Gamma$ consisting of the vertex labeled by $m_2$ and all vertices on its right.
Define 
\begin{equation}
\label{eq:Ra}
R\colon L'\otimes_{\mathbb{Z}} Q \rightarrow L'_\circ \otimes_{\mathbb{Z}} Q, \qquad
(a_1, a_2)\mapsto (a_1, 0, -\epsilon a_2)
\end{equation}
 with the $0$ entry corresponding to the added vertex in $\Gamma_\circ$.
 Recall from \S\ref{sec:homplumb}  that all edges of plumbing graphs have a sign, which determines the corresponding gluing, and in the case of plumbing trees one can assume that all edge signs are equal \cite{neumann1981calculus}. 
When $\epsilon =+$, the Neumann move (A$+$) involves the change of an edge sign of  $\Gamma_\circ$. This change of the edge sign can be undone after changing the orientation of the subspace of $L'\otimes_{\mathbb{Z}} Q$ corresponding to one side of the added vertex in $\Gamma_\circ$. Thus the minus sign multiplying $a_2$ in the formula for $R$.

\smallskip

Next, consider the Neumann move from Figure \ref{fig:Neumann}(B$\epsilon$) with $\epsilon\in\{+,-\}$. 
For $a\in L' \otimes_{\mathbb{Z}} Q$, write $a=(a_\sharp,a_1)$ with entry $a_1$ corresponding to the vertex of $\Gamma$ labeled by $m_1$, and subtuple $a_\sharp$ corresponding to all other vertices of~$\Gamma$.
Define 
\begin{equation}
\label{eq:RaB}
R \colon L'\otimes_{\mathbb{Z}} Q \rightarrow L'_\circ \otimes_{\mathbb{Z}} Q, \qquad
(a_\sharp, a_1)\mapsto (a_\sharp, a_1+2\rho,  \epsilon 2 \rho)
\end{equation}
 where the entry $\epsilon 2\rho$ corresponds to the added vertex in $\Gamma_\circ$.

\smallskip

Finally, consider the Neumann move (C) from Figure \ref{fig:Neumann}. 
Let $v_0$  be the vertex in $\Gamma$ labeled by $m_1+m_2$, and let $v_1$, $v'_0$, and $v_2$ be the vertices in $\Gamma_\circ$ labeled by $m_1$, $0$, and $m_2$, respectively.
For $a\in L' \otimes_{\mathbb{Z}} Q$, write $a=(a_\sharp, a_0, a_\flat)$ with entry $a_0$ corresponding to the vertex $v_0$ in $\Gamma$,
subtuple $a_\sharp$ corresponding to all vertices in $\Gamma$ equivalent to the vertices of $\Gamma_\circ$ on the left of $v_1$, and subtuple $a_\flat$ corresponding to all vertices in $\Gamma$ equivalent to the vertices of $\Gamma_\circ$ on the right of $v_2$.
For $\beta\in Q$,  define 
\begin{equation}
\label{eq:RaC}
R=R_\beta\colon L'\otimes_{\mathbb{Z}} Q \rightarrow L'_\circ \otimes_{\mathbb{Z}} Q, \qquad
(a_\sharp, a_0, a_\flat)\mapsto (a_\sharp,  a_0+\beta, 0, \beta, -a_\flat)
\end{equation}
 where the entries $a_0+\beta$, $0$, and $\beta$ correspond to the vertices $v_1$, $v'_0$, and $v_2$ in $\Gamma_\circ$, respectively. Assume that $\beta$ is chosen as follows
 \begin{equation}
\label{eq:gammadef}
 \beta = \left\{
 \begin{array}{ll}
2\rho & \mbox{if $\deg(v_1)\equiv \deg(v_2)$ mod $2$,}\\[5pt]
0 & \mbox{otherwise.}
 \end{array}
 \right.
 \end{equation}
 This choice of $\beta$ will allow one to verify \eqref{eq:Rdeltaintodelta}.

\begin{proposition}
\label{prop:BQMinvariance2}
For each Neumann move  in Figure \ref{fig:Neumann}, the map $R$ as defined in \eqref{eq:Ra}--\eqref{eq:gammadef} induces a bijection of the sets $\mathsf{B}_Q(M)$ as in \eqref{eq:isoBQM}.

The induced bijection is equivariant with respect to changes of the order of the vertices of the plumbing trees and the action of the Weyl group. 
\end{proposition}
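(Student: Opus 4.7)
The plan is to verify, case by case for the five moves in Figure \ref{fig:Neumann}, five assertions about the explicit formulas \eqref{eq:Ra}--\eqref{eq:gammadef} for $R$: (a) $R(\delta) - \delta_\circ \in 2L'_\circ \otimes_{\mathbb{Z}} Q$ and $R(2L' \otimes_{\mathbb{Z}} Q) \subseteq 2L'_\circ \otimes_{\mathbb{Z}} Q$, so that $R$ restricts to a map of the relevant affine subsets; (b) $R(2BL \otimes_{\mathbb{Z}} Q) \subseteq 2B_\circ L_\circ \otimes_{\mathbb{Z}} Q$, so the induced map on quotients is well defined; (c) the induced map is a bijection; (d) it is $W$-equivariant; and (e) it is independent of the chosen vertex ordering.

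For (a), I would expand $R(\delta) - \delta_\circ$ directly using $\delta = T_E \otimes 2\rho$, tracking how $T_E$ changes under each move: the degrees of the pre-existing affected vertices are preserved in (A$\epsilon$) and shift by $\pm 1$ in (B$\epsilon$) and (C), while the new vertices contribute fixed degrees. All resulting terms are multiples of $2\rho$, and the claim reduces to verifying that certain integer coefficients of $2\rho$ are even, invoking $2\rho \in Q$. The (C) case requires the precise definition of $\beta$ in \eqref{eq:gammadef}, which is engineered so that the contributions $\beta + (1-\deg_\flat)\cdot 2\rho$ and $\beta + (1-\deg_\sharp)\cdot 2\rho$ at the two outer vertices of the new chain lie in $2Q$, matching the parity of the degrees of $v_1$ and $v_2$. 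For (b), I would apply Remark \ref{rmk:columnspace}: up to the sign flips $-\epsilon a_2$ or $-a_\flat$ (which undo the edge-sign changes forced by the moves), the column space of $B$ embeds into that of $B_\circ$ via the same coordinate assignment as $R$ on the preserved coordinates; tensoring with $Q$ and scaling by $2$ yields the claim.

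For (c), rather than exhibit explicit inverses, I would argue by equicardinality: since $|\mathsf{B}_Q(M)| = |H_1(M;Q)|$ depends only on $M$ and $Q$ and is finite, it suffices to prove injectivity of the induced map. If $R(a) - R(a') \in 2B_\circ L_\circ \otimes_{\mathbb{Z}} Q$, projecting onto the coordinates unaffected by the move (or onto the coordinates other than $v'_0$ in move (C)) and reversing the column operation of Remark \ref{rmk:columnspace} produces $a - a' \in 2BL \otimes_{\mathbb{Z}} Q$. For (d), the subtle point is that $W$ does not fix $2\rho$, so the $2\rho$-translations in the formulas for moves (B$\epsilon$) and (C) yield an \emph{a priori} discrepancy $w(R(a)) - R(w(a))$. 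Using the classical identity $\rho - w(\rho) \in Q$, this discrepancy acquires the shape of an integer multiple over $Q$ of the column of $B_\circ$ corresponding to the newly introduced vertex (and, in (C), also to its neighbors), so it lies in $2B_\circ L_\circ \otimes_{\mathbb{Z}} Q$ and maps to zero in $\mathsf{B}_Q(M)$. Finally, (e) is automatic because $R$ is formulated in terms of the left/right partitioning of Remark \ref{rmk:leftright} and not of the global ordering: any ordering compatible with this partition yields the same quotient map.

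The main obstacle is the combined verification of (a) and (d) for move (C), where the parity choice \eqref{eq:gammadef} must simultaneously secure affine compatibility at both ends of the new chain and $W$-equivariance, while the sign flip $-a_\flat$ together with the separate bookkeeping of the $v_1$-, $v'_0$-, and $v_2$-columns makes the computation the most delicate of the five moves.
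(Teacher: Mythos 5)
Your proposal is correct and follows the same overall architecture as the paper's proof: verify compatibility with the affine structure (your (a) is exactly the paper's verification of \eqref{eq:Rdeltaintodelta}, including the same reduction of move (C) to the parity condition behind \eqref{eq:gammadef}), establish injectivity of the induced map, and handle surjectivity and equivariance separately. Two of your steps diverge usefully from the paper. For surjectivity, the paper performs a ``direct analysis of the extra column space of $B_\circ$'' via Remark \ref{rmk:columnspace}, whereas you replace this with an equicardinality argument: both quotients are affinely modelled on $H_1(M;Q)$, a finite homeomorphism invariant of $M$, so injectivity alone suffices. This is legitimate and arguably cleaner, at the cost of invoking the topological invariance of $H_1$ rather than staying inside pure linear algebra over the framing matrices. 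For equivariance, the paper asserts it ``follows immediately,'' but as you correctly observe this is not literally true at the level of representatives for moves (B$\epsilon$) and (C): the translations by $2\rho$ do not commute with $w\in W$. Your resolution --- that the discrepancy $w(R(a))-R(w(a))$ equals $2B_\circ(e_{v}\otimes(w(\rho)-\rho))$ for $v$ the added vertex, hence vanishes in the quotient because $w(\rho)-\rho\in Q$ --- is exactly the computation needed (one checks, e.g.\ for (B$\epsilon$), that the added vertex's column of $B_\circ$ is supported on itself and its unique neighbour with entries $\epsilon$ and $1$, matching the discrepancy $(0,\dots,2(w(\rho)-\rho),\epsilon\,2(w(\rho)-\rho))$), and it supplies a detail the paper leaves implicit. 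Your explicit isolation of step (b), the inclusion $R(2BL\otimes_{\mathbb{Z}}Q)\subseteq 2B_\circ L_\circ\otimes_{\mathbb{Z}}Q$ needed for well-definedness on quotients, likewise fills a gap the paper passes over silently. I see no error in your plan.
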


\begin{proof}
For each Neumann move  in Figure \ref{fig:Neumann}, it is immediate to verify that $R$ is injective. Moreover, one has
\begin{equation}
\label{eq:Rdeltaintodelta}
R\left(\delta+2L' \otimes_{\mathbb{Z}} Q\right) \subseteq \delta_\circ+2L_\circ' \otimes_{\mathbb{Z}} Q.
\end{equation}
For instance, let us verify this for the Neumann move (C). Select 
\[
\ell\in \delta+2L' \otimes_{\mathbb{Z}} Q, \quad\mbox{ and write } \quad\ell=(\ell_\sharp, \ell_0, \ell_\flat).
\]
This implies $\ell_0\in (2-\deg v_0)2\rho+2Q$. Then in order to have 
\[
R_\beta(\ell)= (\ell_\sharp,  \ell_0+\beta, 0, \beta, -\ell_\flat)\in \delta_\circ+2L_\circ' \otimes_{\mathbb{Z}} Q, 
\]
one needs
\[
\ell_0+\beta \in (2-\deg v_1)2\rho+2Q \quad \mbox{and}\quad \beta \in (2-\deg v_2)2\rho+2Q.
\]
One has $\deg v_0\equiv \deg v_1$ mod $2$ if and only if $\deg v_1 \not\equiv \deg v_2$ mod $2$.
Hence both of these conditions are implied by the choice of $
\beta$ in \eqref{eq:gammadef}.

The induced map \eqref{eq:isoBQM} is thus well-defined and injective.
Recall that  the column space of $B$ is isomorphic to a subspace of the column space of $B_\circ$, see Remark \ref{rmk:columnspace}.
Hence the surjectivity of the induced map \eqref{eq:isoBQM} follows by a direct analysis of the extra column space of $B_\circ$.
Finally, the equivariance with respect to changes of the order of the vertices of the plumbing trees and the action of the Weyl group  follows immediately.
\end{proof}

\subsection{Reduced plumbing trees}
\label{sec:redGamma}
We will use reduced plumbing trees as in \cite{ri2023refined}. These are defined as follows.
Let $\Gamma$ be a plumbing tree. 
First, define a \textit{branch} of $\Gamma$ to be a path in $\Gamma$ 
connecting a vertex of degree at least three to a vertex of degree one through a sequence of degree-$2$ vertices. 
Define a branch to be \textit{contractible} if the branch can be contracted down to a single vertex by a sequence of the Neumann moves from Figure \ref{fig:Neumann}.

Contractible branches can be characterized using continued fractions. 
For integers $a_0, \dots, a_n$ with $a_n\neq 0$, consider the \textit{negative continued fraction}
\[
\llbracket a_0, \dots, a_n \rrbracket := a_0 -\dfrac{1}{a_1 - \dfrac{1}{\dots -\dfrac{1}{a_n}}} \in \mathbb{Q}.
\] 
When $a_n=0$, define 
\begin{equation}
\label{eq:contfractend0}
\llbracket a_0, \dots, a_{n-2}, a_{n-1}, 0 \rrbracket:= \llbracket a_0, \dots, a_{n-2} \rrbracket.
\end{equation}
For a path $\Gamma_{wv}$ from a vertex $w$ to a vertex $v$ through a sequence of degree-$2$ vertices, let $m_w, u_1, \dots, u_n, m_v$ be the weights of the vertices from $w$ to $v$ taken in this order. 

\begin{lemma}
Changing $\Gamma_{wv}$ through a sequence of paths via the Neumann moves from Figure \ref{fig:Neumann}
preserves $\llbracket m_w, u_1, \dots, u_n, m_v \rrbracket$. 
\end{lemma}

\begin{proof}
This follows from the three identities
\begin{align*}
\llbracket \dots, a \pm 1, \pm1, b\pm 1, \dots \rrbracket &= \llbracket \dots, a , b, \dots \rrbracket,\\
\llbracket \dots, a \pm1, \pm 1 \rrbracket &=  \llbracket \dots, a \rrbracket,\\
\llbracket \dots, a, 0, b,\dots \rrbracket &= \llbracket \dots, a+b,\dots \rrbracket
\end{align*}
corresponding to the Neumann moves from Figure \ref{fig:Neumann}.
For these identities, see e.g., \cite[pg.~57]{hirasawa2006genera}.
Note that \eqref{eq:contfractend0} is compatible with Neumann's splitting move  \cite[Prop.~2.2.(3)]{neumann1981calculus}.
\end{proof}

Consequently, one has:

\begin{lemma}
\label{lemma:contractiblepath}
The path $\Gamma_{wv}$ is contractible if and only if $\llbracket m_w, u_1, \dots, u_n, m_v \rrbracket$ is an integer. In this case, 
$\llbracket m_w, u_1, \dots, u_n, m_v \rrbracket$ equals the weight of the vertex resulting from the contraction.
\end{lemma}

In particular, this statement applies when the path $\Gamma_{wv}$ is a branch as above.

A vertex $v$ of $\Gamma$ is defined to be \textit{reducible} if $v$ has degree at least $3$ but, after contracting all contractible branches incident to $v$, the degree of $v$ drops down to $1$ or $2$.

Finally, define  $\Gamma$ to be \textit{reduced} if $\Gamma$ has no reducible vertices. 
Any plumbing tree can be reduced  via a sequence of the Neumann moves from Figure \ref{fig:Neumann}.
Note that reducing a reducible vertex to a vertex of degree $1$ or $2$ via a sequence of Neumann moves may yield a new reducible vertex. For this, the tree $\Gamma$ becomes reduced after \textit{repeatedly} reducing all reducible vertices via a sequence of Neumann moves.
Moreover, one has:

\begin{lemma} 
\label{lemma:weakcontract}
By removing contractible branches, a weakly negative-definite plumbing tree becomes reduced while remaining weakly negative definite.
\end{lemma}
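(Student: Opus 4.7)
The plan is to identify the removal of a contractible branch with a Schur complement operation on the framing matrix, and then to deduce the weakly negative definite property of the reduced tree from the Schur complement inversion identity combined with the elementary observation that branch contraction can only decrease the degrees of surviving vertices.

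First, I would verify the Schur complement interpretation. In move (B$\epsilon$), the removed leaf has framing $\epsilon=\pm 1$, and a direct computation gives $(m_1+\epsilon)-1\cdot\epsilon^{-1}\cdot 1 = m_1$, which is exactly the $(u,u)$-entry update of the size-one Schur complement $B_\circ = B/(\epsilon)$. The same size-one Schur complement identification holds for (A$\epsilon$) after incorporating the orientation-sign convention of Remark \ref{rmk:columnspace}. Iterating such eliminations along a contractible branch $u - v_1 - \cdots - v_k$ emanating from a vertex $u$ of degree at least $3$, each intermediate pivot is $\pm 1$ and hence invertible; the composition is the full Schur complement
\[
B_\circ \;=\; B/B_{22} \;=\; B_{11} - B_{12}B_{22}^{-1}B_{21},
\]
where $B_{22}$ is the tridiagonal submatrix of $B$ indexed by $v_1,\ldots,v_k$ and $B_{11}$ is the submatrix indexed by the surviving vertices. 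In particular, contractibility of the branch forces $B_{22}$ to be invertible.

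With this identification in hand, I would apply the standard identity $(B/B_{22})^{-1}=(B^{-1})_{11}$, so that $B_\circ^{-1}$ is literally the principal submatrix of $B^{-1}$ indexed by the vertices surviving in $\Gamma_\circ$. Because a branch contraction removes only the interior path-vertices and the leaf, every surviving vertex other than $u$ retains its degree and $u$'s degree strictly drops; thus the set $T$ of degree-$\geq 3$ vertices of $\Gamma_\circ$ is contained in the corresponding set $T_0$ for $\Gamma$. The weakly negative definite hypothesis says $(B^{-1})_{T_0T_0}$ is negative definite, and hence its principal submatrix $(B^{-1})_{TT}=(B_\circ^{-1})_{TT}$ is negative definite as well. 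Invertibility of $B_\circ$ over $\mathbb{Q}$ is immediate from $\det(B)=\det(B_{22})\det(B_\circ)$, so $\Gamma_\circ$ is weakly negative definite. Since every contraction strictly decreases the number of vertices, the iterative procedure described just before the lemma terminates, producing a reduced tree by construction; applying the Schur complement preservation at every step yields the full claim.

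I expect the main obstacle to be the very first step: matching contractibility of a branch with the invertibility of the branch submatrix $B_{22}$ and correctly bookkeeping the orientation flips for the (A$+$) case spelled out in Remark \ref{rmk:columnspace}. Once the Schur complement identification is secured, the remainder is a routine application of the inversion formula together with the monotonicity of the high-degree vertex set under contraction.
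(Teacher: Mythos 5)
Your Schur-complement strategy is appealing and, where it applies, it is a conceptual repackaging of what the paper verifies by hand: the identity $(B/B_{22})^{-1}=(B^{-1})_{11}$ is precisely the content of the paper's direct computations $\langle R(\ell),R(\ell)\rangle=\langle \ell,\ell\rangle$ in \eqref{eq:RRllA-}, \eqref{eq:RRllA+}, \eqref{eq:RRllC}, and combining it with the containment of the degree-$\geq 3$ vertex sets gives a clean argument. However, there is a genuine gap at exactly the step you flag as the main obstacle. You justify the invertibility of the branch block $B_{22}$ (and the identification of the contracted framing matrix with $B/B_{22}$) by asserting that the contraction is an iteration of size-one eliminations with pivots $\pm 1$, i.e.\ that a contractible branch can be collapsed using only moves (A$\epsilon$) and (B$\epsilon$). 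This is false in general: a branch containing an interior vertex with Euler number $0$ (say $u - v_1 - v_2$ with $m_{v_1}=0$) cannot be removed by (A$\pm$) or (B$\pm$) alone, and the paper's proof explicitly invokes move (C) --- restricted to the case where at most one of the two merged vertices has degree $\geq 3$ --- precisely to handle this. For such a vertex the would-be $1\times 1$ pivot is $0$, so the chain of eliminations, and with it your conclusion that $B_{22}$ is invertible, breaks down as stated.

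The gap is repairable within your framework: move (C) is itself a Schur complement, not with a $1\times 1$ pivot but with respect to the $2\times 2$ block $\left(\begin{smallmatrix}0&1\\1&m_2\end{smallmatrix}\right)$ indexed by the $0$-framed vertex and one of its neighbours; this block has determinant $-1$, and a short computation shows the complement reproduces the merged vertex with framing $m_1+m_2$ (up to the sign flips discussed in Remark \ref{rmk:columnspace}). With this case added, the quotient formula for iterated Schur complements again gives the contracted matrix as $B/B_{22}$ with $B_{22}$ invertible, and the rest of your argument --- $\det(B)=\det(B_{22})\det(B/B_{22})$, the inversion identity, negative definiteness of principal submatrices, and termination --- goes through. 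You should also justify why a contraction sequence can be arranged so that its net effect is the elimination of exactly the branch vertices: the definition of contractibility permits arbitrary Neumann moves, including reverse moves that temporarily add vertices, so either restrict to a normalized contraction sequence or argue move by move as the paper does.
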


\begin{proof}
We  argue that contracting a branch preserves the property of being weakly negative definite. A branch may be contracted by a sequence of the Neumann moves from Figure \ref{fig:Neumann} of type (A$\pm$), (B$\pm$), and those moves of type (C) where at most one of the two vertices labelled by $m_1$ and $m_2$ has degree at least $3$. 
(Moves of type (C) where the two vertices labelled by $m_1$ and $m_2$ have both degree at least $3$ are not necessary to contract branches. 
In fact, these moves do not necessarily preserve the weakly negative-definite property of the plumbing trees, see \cite[Ex.~4.2]{ri2023refined}. Hence we avoid using them in this argument.)

For each one of these Neumann moves, let $\Gamma$ and $\Gamma_\circ$ be the bottom and top plumbing graphs, respectively. We argue that if one of them is weakly negative definite, so is the other one. As in \S\ref{sec:genspincstr}, let $B$ and $B_\circ$ be the framing matrices of $\Gamma$ and $\Gamma_\circ$, respectively, and let $s$ and $s_\circ$ be their ranks, respectively. 
Let $H\subset Q^s$ and $H_\circ\subset Q^{s_\circ}$ be the subspaces spanned by the vertices of degree~\mbox{$\geq 3$} in $\Gamma$ and $\Gamma_\circ$, respectively. 
If the Neumann move under consideration reduces a reducible vertex to a vertex of degree $1$ or $2$ in $\Gamma$, then quotient $H_\circ$ by the linear subspace corresponding to that reducible vertex of degree $3$ in $\Gamma_\circ$, and denote this quotient still by $H_\circ$. Thus we may assume that $H$ and $H_\circ$ have the same rank.

We proceed to construct a map $R\colon Q^s\rightarrow Q^{s_\circ}$ which induces a linear isomorphism $H\cong H_\circ$.
For the moves (A$\pm$), consider the map $R$ from \eqref{eq:Ra}; for the moves (B$\pm$), consider the map
\[
R \colon Q^s \rightarrow Q^{s+1}, \qquad
(a_\sharp, a_1)\mapsto (a_\sharp, a_1, 0)
\]
with notation as in \eqref{eq:RaB}; and for the move (C), consider the map $R=R_\beta$ from \eqref{eq:RaC} with $\beta=0$. It is immediate to see that for each move, the map $R$ so defined induces an isomorphism $H\cong H_\circ$. 

Moreover, for each $\ell\in H$, a direct computation shows that $\langle \ell, \ell\rangle= \langle R(\ell), R(\ell)\rangle$, with the pairings defined by the matrices $B^{-1}$ and $B_\circ^{-1}$ as in \eqref{eq:pairingL'Q}, respectively. For moves (A$\pm$) and (C), this is a special case of a more general computation later done in \eqref{eq:RRllA-}, \eqref{eq:RRllA+}, \eqref{eq:RRllC}; the case of moves (B$\pm$) follows similarly.
Hence the statement.
\end{proof}

We will use the following result from \cite{ri2023refined} 

\begin{proposition}[{\cite[Prop.~3.4]{ri2023refined}}]
Any two reduced plumbing trees are related by a sequence of the Neumann moves from Figure \ref{fig:Neumann}
if and only if they are related by a sequence of those Neumann moves from Figure \ref{fig:Neumann} which do not create any reducible vertices.
\end{proposition}

We include a proof for the benefit of the reader.

\begin{proof}
Let $\Gamma$ and $\Gamma'$ be two reduced plumbing trees and assume that there exists a sequence 
\[
\mathcal{S}:=(\Gamma=\Gamma_1 \rightarrow \Gamma_2\rightarrow \dots \rightarrow \Gamma_n=\Gamma')
\]
of plumbing trees related by Neumann moves. Assume that one of these plumbing trees is not reduced, i.e., it contains at least one reducible vertex.
Select a $\Gamma_i$ in $\mathcal{S}$ with the highest number of reducible vertices, say $k$.
Then $\Gamma_i$ contains at least one vertex $v$ with the following property: 
$v$ is reducible and incident to a collection of contractible subtrees containing no other reducible vertices,
and $v$ becomes reduced after contracting all such subtrees.
Let $T_v$ be one of these contractible subtrees incident to $v$. 

We may rearrange the Neumann moves in $\mathcal{S}$ so that the subtree $T_v$ is created from $v$ and contracted to $v$ by an adjacent subsequence of Neumann moves. This can be done because
$T_v$ contains no other reducible vertices and the other Neumann moves in $\mathcal{S}$ do not involve $T_v\setminus \{v\}$.
We then may excise this subsequence.
Upon rearranging and excising subsequences for each such subtree $T_v$ incident to $v$, we produce a sequence in which the vertex $v$ is reduced in all plumbing trees in which it appears. Applying this procedure for all $\Gamma_i$ with $k$ reducible vertices yields a new sequence $\overline{\mathcal{S}}$ of plumbing trees from $\Gamma$ to $\Gamma'$ for which every non-reduced graph $\overline{\Gamma}_i \in \overline{\mathcal{S}}$ contains at most $k-1$ reducible vertices.
By recursion, the statement follows. 
\end{proof}


\section{Admissible collections}
\label{sec:admser}

Here we define and study admissible collections. These will be used to construct invariant series in \S\ref{sec:qseries}.
We end the section with the proof of Theorem~\ref{thm:uniquenessadm}.

Let $\iota\in W$ be the element defined by
\begin{equation}
\label{eq:iota}
\iota(\alpha) = -\alpha \quad\mbox{for all $\alpha\in Q$}.
\end{equation}
We will apply the Weyl denominator formula
\begin{equation}
\label{eq:Weyldenomformula}
\sum_{w\in W} (-1)^{\ell(w)}\, z^{2w(\rho)} = \prod_{\alpha \in\Delta^+} \left( z^\alpha - z^{-\alpha} \right),
\end{equation}
and a simple consequence of it:

\begin{lemma}
\label{lem:ellw'}
One has $\ell(\iota) \equiv |\Delta^+|$ mod $2$.
Equivalently, one has
\[
(-1)^{\ell(\iota w)} = (-1)^{|\Delta^+|} (-1)^{\ell(w)} \qquad \mbox{for $w\in W$.}
\]
\end{lemma}

\begin{proof}
The statement follows by comparing the coefficients of $z^{2\iota w(\rho)}$ on the two sides of the Weyl denominator formula \eqref{eq:Weyldenomformula}.
\end{proof}

\subsection{Admissible collections}
For a root lattice $Q$, consider a formal series
\begin{equation}
\label{eq:Pz}
P(z) := \sum_{\alpha\in Q} c(\alpha) z^\alpha
\end{equation}
with coefficients $c(\alpha)$ in a commutative ring $R$.
Here $z^\alpha$ for  $\alpha\in Q$ (or more generally, $\alpha$ in the weight lattice) is a multi-index monomial. By linearity, it is enough to assume that $\alpha$ is a root, and in this case, $z^\alpha$ is defined as
\begin{equation}
\label{eq:zpowers}
z^\alpha :=\prod_{i=1}^r z_i^{\langle \alpha^\vee, \lambda_i\rangle}
\end{equation}
with $\alpha^\vee:=\frac{2}{\langle \alpha, \alpha\rangle}\alpha$ being the coroot of $\alpha$ and
$\lambda_1,\dots,\lambda_r$ being the fundamental weights. Hence $P(z)\in R\left\llbracket z_1^{\pm 1}, \dots, z_r^{\pm 1}\right\rrbracket$, the $R$-module of formal series in variables $z_1, \dots, z_r$.

We will consider a collection of such series indexed by elements of $W$ and non-negative integers:
\begin{equation}
\label{eq:P}
\mathcal{P}=\left(P_{x,n}(z) = \sum_{\alpha\in Q} c_{x,n}(\alpha)z^\alpha \in R\left\llbracket z_1^{\pm 1}, \dots, z_r^{\pm 1}\right\rrbracket \,\, \Bigg| \,\, x\in W, \,\, n\geq 0 \right)
\end{equation}
such that for $n=1$, one has
\begin{equation}
\label{eq:Px1}
P_{x,1}(z) = \sum_{w\in W} (-1)^{\ell(w)}\, z^{2 w(\rho)}.
\end{equation}
In particular, $P_{x,1}(z)$ is independent of $x$.
There is a natural action of $W$ on such collections:
\begin{equation}
\label{eq:WonP}
w\left(P_{x,n}(z)\right)_{x,n} = \left(P_{wx,n}(z)\right)_{x,n} \quad \mbox{for $w\in W$}.
\end{equation}
We identify two collections if they are in the same orbit under $W$.

\begin{definition}
A collection of series $\mathcal{P}$ as in \eqref{eq:P} is \textit{admissible} if it satisfies the following four properties:\\[-5pt]

\begin{enumerate}[start=1,label={(P\arabic*)}]

\item \label{it:P1}
One has $P_{x,2}(z) =1$ for all $x\in W$.\\[-5pt]

\item \label{it:P2}
For all $x\in W$ and $n\geq 1$, one has
\begin{align*}
P_{x,n-1}(z) = \left(\sum_{w\in W} (-1)^{\ell(w)}\, z^{2 w(\rho)}\right) P_{x,n}(z).
\end{align*}
Equivalenty,
\[
(-1)^{|\Delta^+|}\sum_{w\in W} (-1)^{\ell(w)}\, c_{x,n}\left( \alpha + 2 w(\rho)\right) = c_{x,n-1}(\alpha)  \qquad \mbox{for $\alpha\in Q$}.
\]
The equivalence of the two conditions is due to Lemma \ref{lem:ellw'} and the fact that
for each $w\in W$, the coefficient of $z^\alpha$ in 
$z^{2\iota w(\rho)} P_{x,n}(z)$
is equal to the coefficient of $z^{\alpha+2w(\rho)}$ in 
$P_{x,n}(z)$ (since $w(\rho)+\iota w(\rho)=0$ in~$Q$).\\[-5pt]

\item \label{it:P3}
For $x\in W$ and $n\geq 0$, the involution $x\mapsto \iota x$ yields
\[
\left[ P_{x,n}(z) \right]_\alpha = (-1)^{|\Delta^+|\,n} \left[ P_{\iota x, n}(z )  \right]_{-\alpha}  \qquad \mbox{for $\alpha\in Q$}.
\]
Equivalently, one has
\begin{equation*}
c_{x,n}(\alpha) = (-1)^{|\Delta^+|\,n} c_{\iota x,n}(-\alpha)  \qquad \mbox{for $\alpha\in Q$}.
\end{equation*}
Here, $\iota \in W$ is as in \eqref{eq:iota}.\\[-5pt]

\item \label{it:P4}
For  $x\in W$ and $p,q\geq 1$, one has 
\[
P_{x,p}(z) P_{x,q}(z) = P_{x,p+q-2}(z).
\]
Equivalently, one has
\begin{equation*}
\sum_{\beta} c_{x,p}\left( \alpha + \beta\right) c_{x,q}\left( -\beta\right) = c_{x, p+q-2}(\alpha) \qquad \mbox{for $\alpha\in Q$}.
\end{equation*}
In particular, the product $P_{x,p}(z) P_{x,q}(z)$ is assumed to exist, hence the sum over $\beta$ here is finite.

\end{enumerate}
\end{definition}

Note that \ref{it:P4} for $p=1$ or $q=1$ together with \eqref{eq:Px1} is equivalent to~\ref{it:P2}.

\begin{definition}
A collection of series $\mathcal{P}$ as in \eqref{eq:P} is \textit{symmetric} if it satisfies the following property:\\[-5pt]

\begin{enumerate}[start=5,label={(P\arabic*)}]

\item \label{it:P5}
For $x,w\in W$ and $n\geq 0$, one has
\[
\left[ P_{x,n}(z) \right]_\alpha = (-1)^{\ell(w)n} \left[ P_{wx,n}(z )  \right]_{w(\alpha)}  \qquad \mbox{for $\alpha\in Q$}.
\]
Equivalently, one has
\begin{equation*}
c_{x,n}(\alpha) = (-1)^{\ell(w)n} c_{wx,n}(w(\alpha))  \qquad \mbox{for $\alpha\in Q$}.
\end{equation*}
\end{enumerate}
\end{definition}

Note that \ref{it:P3} is the special case of \ref{it:P5} with $w=\iota$.

\begin{remark}
\label{rmk:n012}
Properties \ref{it:P1} and \ref{it:P2} with $n\in\{1,2\}$ imply
\[
P_{x,n}(z):=
\left\{
\begin{array}{ll}
\left(\sum\limits_{w\in W} (-1)^{\ell(w)}\, z^{2w(\rho)}\right)^2 & \mbox{if $n=0$,}\\[12pt]
\sum\limits_{w\in W} (-1)^{\ell(w)}\, z^{2w(\rho)} & \mbox{if $n=1$,}\\[12pt]
1 & \mbox{if $n=2$.}
\end{array}
\right.
\]
In particular, $P_{x,n}(z)$ for $n\in\{0,1,2\}$ does not depend on $x$.
Moreover, \ref{it:P4} implies
\[
P_{x,n}(z) = \left(P_{x,3}(z)\right)^{n-2} \quad \mbox{if $n\geq 3$}
\]
for $x\in W$. Thus an admissible collection is determined by a collection of series
\[
P_x(z):= P_{x,3}(z) \quad \mbox{for $x\in W$}
\]
such that $(P_x(z))^n$ exists for $n\geq2$, and 
satisfying \ref{it:P3} and \ref{it:P2} with $n=3$. The latter is:
\begin{align}
\label{eq:P2powern3}
1 = \left(\sum_{w\in W} (-1)^{\ell(\iota w)}\, z^{2\iota w(\rho)}\right) P_x(z).
\end{align}
\end{remark}

\subsection{A key example: the Kostant collection}
\label{sec:Kz}
Consider 
\begin{equation}
\label{eq:Wz}
K(z):= \prod_{\alpha\in \Delta^+}\left(\sum_{i\geq 0} z^{-(2i+1)\alpha} \right).
\end{equation}
Expanding, this is
\[
K(z) = \sum_{\alpha\in Q} k(\alpha)\, z^{-2\rho-2\alpha}
\]
where $k(\alpha)$ is the \textit{Kostant partition function} defined as
\[
k(\alpha):= 
\begin{array}{l}
\mbox{number of ways to represent $\alpha$} \\ 
\mbox{as a sum of positive roots.}
\end{array}
\]
More generally, for $x\in W$, define the \textit{Weyl twist} of $K(z)$ by $x$ as
\begin{equation}
\label{eq:Ktwist}
K_x(z) = (-1)^{\ell(x)}\sum_{\alpha\in Q} k(\alpha)\, z^{-x(2\rho+2\alpha)}.
\end{equation}
Define the \textit{Kostant collection} $\mathcal{K}$ as 
\[
K_{x,n}(z):=
\left\{
\begin{array}{ll}
\left(\sum\limits_{w\in W} (-1)^{\ell(w)}\, z^{2w(\rho)}\right)^2 & \mbox{if $n=0$,}\\[12pt]
\sum\limits_{w\in W} (-1)^{\ell(w)}\, z^{2w(\rho)} & \mbox{if $n=1$,}\\[12pt]
1 & \mbox{if $n=2$,}\\[12pt]
\left(K_x(z)\right)^{n-2} & \mbox{if $n\geq 3$}
\end{array}
\right.
\]
for $x\in W$.
Note that $K_{x,n}(z)$ for $n\in\{0,1,2\}$ does not depend on $x$.

\begin{lemma}
\label{lem:Kadmsym}
The Kostant collection $\mathcal{K}$ is admissible and symmetric.
\end{lemma}

\begin{proof}
After Remark \ref{rmk:n012}, to show that the Kostant collection is admissible 
 it is enough to verify that $K_x(z)$ for $x\in W$ satisfies the following properties: the powers $(K_x(z))^n$ exist for $n\geq 2$; \ref{it:P2} with $n=3$, which is equation \eqref{eq:P2powern3}, holds; 
 and \ref{it:P3} holds.
 Since $K(z)$ is in the ring $\mathbb{Z}\left\llbracket z_1^{-1}, \dots, z_r^{-1}\right\rrbracket$, its positive powers exist. The series $K_x(z)$ is in a similar ring, hence its powers exist for $x\in W$.
Property \eqref{eq:P2powern3} holds from the Weyl denominator formula \eqref{eq:Weyldenomformula} and the fact that
\[
\left( z^\alpha - z^{-\alpha} \right) \left(\sum_{i\geq 0} z^{-(2i+1)\alpha} \right) = 1
\]
for each $\alpha\in \Delta^+$.

For properties \ref{it:P3} and \ref{it:P5}, the case $n\in\{0,1, 2, 3\}$ follows from the definition and the fact that $\ell(wx)\equiv \ell(w)+\ell(x)$ mod $2$. Finally, the case $n\geq 4$ follows since $K_{x,n}(z) = (K_x(z))^{n-2}$.
\end{proof}

\subsection{Proof of uniqueness}
Here we prove Theorem \ref{thm:uniquenessadm}. First we study admissible collections:

\begin{theorem}
\label{thm:adm}
A collection $\mathcal{P}=\left(P_{x,n}(z) \right)_{x,n}$ is admissible  if and only if
\begin{equation}
\label{eq:Padm}
P_{x,n}(z):=
\left\{
\begin{array}{ll}
\left(\sum\limits_{w\in W} (-1)^{\ell(w)}\, z^{2w(\rho)}\right)^2 & \mbox{if $n=0$,}\\[12pt]
\sum\limits_{w\in W} (-1)^{\ell(w)}\, z^{2w(\rho)} & \mbox{if $n=1$,}\\[12pt]
1 & \mbox{if $n=2$,}\\[12pt]
\left(P_x(z)\right)^{n-2} & \mbox{if $n\geq 3$}
\end{array}
\right.
\end{equation}
where 
\[
P_x(z) = K_{f(x)}(z)
\]
for some map of sets $f\colon W\rightarrow W$ such that $f(\iota x)= \iota f(x)$.

In particular, there are only finitely many admissible collections. When $Q=A_1$, 
the collection $\mathcal{P}=\mathcal{K}$ is the unique admissible collection.
\end{theorem}

The map $f$ in the statement induces a map of sets $\overline{f}\colon W/\mathbb{S}_2\rightarrow W/\mathbb{S}_2$, where $\mathbb{S}_2=\{1_W, \iota\}$, which need not be injective, nor surjective. 
In particular,
\begin{equation}
\label{eq:PsetinKset}
\{P_x(z) \,|\, x\in W\} \subseteq \{K_w(z) \,|\, w\in W\}.
\end{equation}
As we identify collections up to the action of $W$ in \eqref{eq:WonP}, the statement implies that an admissible collection is uniquely determined by $\overline{f}$.

\begin{proof}
The backward implication follows as in Lemma \ref{lem:Kadmsym}.
For the forward implication, let $\mathcal{P}$ be an admissible collection. From Remark \ref{rmk:n012},  it suffices to determine $P_x(z):=P_{x,3}(z)$ for $x\in W$.
By \ref{it:P4}, the powers $P^n_x(z)$ exist for $n\geq 2$. However, by definition $P_x(z)$ is in the $R$-module $R\left\llbracket z_1^{\pm 1}, \dots, z_r^{\pm1}\right\rrbracket$, where products in general do not exist.
For the powers $P^n_x(z)$ to exist, one needs that $P_x(z)$ lies in a ring inside $R\left\llbracket z_1^{\pm 1}, \dots, z_r^{\pm1}\right\rrbracket$.
In particular, one needs that $P_x(z)$ lies in a ring of Laurent series where for each Laurent series and each $i=1,\dots,r$, the powers of $z_i$ are either bounded below or bounded above, i.e.,
\[
P_x(z) \in R\left(\!\left(z_1^{\epsilon_1}, \dots, z_r^{\epsilon_r}\right)\!\right)
\]
for some choice of $\epsilon_i\in \{1, -1\}$.
Up to a reflection by a Weyl element, one can assume that for each Laurent series all variables $z_i$ have exponents bounded above, i.e.,
\begin{equation}
\label{eq:PLaurent}
P_x(z) \in R\left(\!\left(z_1^{-1}, \dots, z_r^{-1}\right)\!\right).
\end{equation}
We conclude by showing that $P_x(z)=K(z)$ as in \eqref{eq:Wz}. For this, we use property \ref{it:P2} with $n=3$, which using \ref{it:P1} simplifies as follows: for $P_x(z)=\sum_{\alpha\in Q} c(\alpha)\, z^\alpha$, one has
\begin{equation}
\label{eq:P2n3}
(-1)^{|\Delta^+|}\sum_{w\in W} (-1)^{\ell(w)}\, c\left( \alpha + 2 w(\rho)\right) = 
\left\{
\begin{array}{ll}
1 & \mbox{if $\alpha=0$ in $Q$,}\\[5pt]
0 & \mbox{otherwise.}
\end{array}
\right.
\end{equation}

\begin{figure}[t]
\begin{tikzpicture}
\begin{rootSystem}{A}
\setlength\weightLength{1.5cm}
\weightLattice{6}
\wt [multiplicity=2, black]{1}{0}
\wt [multiplicity=2, black]{-1}{1}
\wt [multiplicity=2, black]{0}{1}
\node[] at \weight{0}{-2} {\footnotesize\(1\)};
\node[] at \weight{0}{-4} {\footnotesize\(2\)};
\node[] at \weight{0}{-6} {\footnotesize\(3\)};
\node[] at \weight{-2}{-2} {\footnotesize\(1\)};
\node[] at \weight{-4}{-2} {\footnotesize\(1\)};
\node[] at \weight{-2}{-4} {\footnotesize\(2\)};
\node[] at \weight{2}{-4} {\footnotesize\(1\)};
\node[] at \weight{4}{-6} {\footnotesize\(1\)};
\node[] at \weight{2}{-6} {\footnotesize\(2\)};
\end{rootSystem}
\end{tikzpicture}
\caption{The lattice points of length at most $12$ in the support of $K(z)$ with the corresponding coefficient in the root lattice $A_2$. Here the three positive roots have length $2$ and are marked with dots.}
\label{fig:A2suppK}
\end{figure}

First, some notation. Define the support of $P(z)$ as
\[
\mathrm{supp}\,P_x(z) :=\{\alpha\in Q \, | \, c(\alpha)\neq 0\}.
\]
One has 
\[
\mathrm{supp}\,K(z) = -2\rho + 2\mathbb{Z}_{\geq 0}\langle\Delta^-\rangle,  
\]
where $\Delta^- = -\Delta^+$ is the set of negative roots (see Figure \ref{fig:A2suppK}). From \eqref{eq:PLaurent}, one has 
\[
\mathrm{supp}\,P_x(z) \supseteq \mathrm{supp}\,K(z).
\]

The argument below uses the lattice points in $\mathrm{supp}\,P_x(z)$ that are larger than the other lattice points in $\mathrm{supp}\,P_x(z)$ --- for two lattice points $\alpha_1$ and $\alpha_2$, one says that $\alpha_1 > \alpha_2$ if $\alpha_1 - \alpha_2\in \mathbb{Z}_{\geq 0}\langle\Delta^+\rangle$, i.e., $\alpha_1 - \alpha_2$ is a sum of positive roots.

\begin{figure}[t]
\begin{tikzpicture}
\begin{rootSystem}{A}
\setlength\weightLength{1.5cm}
\weightLattice{3}
\wt [multiplicity=2, black]{2}{0}
\wt [multiplicity=2, black]{-2}{0}
\wt [multiplicity=2, black]{0}{2}
\wt [multiplicity=2, black]{-2}{2}
\wt [multiplicity=2, black]{2}{-2}
\node[] at \weight{0}{-2} {\footnotesize\(\mu\)};
\node[] at \weight{0}{0} {\footnotesize\(\mu'\)};
\end{rootSystem}
\end{tikzpicture}
\caption{For a lattice point $\mu$ in the root lattice $A_2$, the figure shows $\mu'$ and the lattice points \mbox{$\mu'+2w(\rho)$} with $w\neq \iota$.}
\label{fig:A2mu}
\end{figure}

By contradiction, assume $P_x(z)\neq K(z)$. Write $K(z)=\sum_{\alpha\in Q} d(\alpha)\, z^\alpha$.
Let $\mu \in \mathrm{supp}\,P(z)$ be a maximum lattice point with respect to $>$ such that $c(\mu)\neq d(\mu)$. Such a maximum exists from the assumption \eqref{eq:PLaurent}. 
Apply \eqref{eq:P2n3} with $\mu' :=\mu + 2\rho$, that is:
\begin{equation}
\label{eq:eq:P2n3beta}
(-1)^{|\Delta^+|}\sum_{w\in W} (-1)^{\ell(w)}\, c\left( \mu' + 2 w(\rho)\right) = 
\left\{
\begin{array}{ll}
1 & \mbox{if $\mu'=0$ in $Q$,}\\[5pt]
0 & \mbox{otherwise.}
\end{array}
\right.
\end{equation}
For $w\neq \iota$, one has $\rho + w(\rho)>0$ (see \cite[Corollary on pg.~50]{MR0323842}), thus 
\[
\mu' + 2 w(\rho) > \mu,
\]
hence $c\left(\mu'  + 2 w(\rho)\right)=d\left(\mu' + 2 w(\rho)\right)$ by definition of $\mu$. Then the only summand in \eqref{eq:eq:P2n3beta} which differs from the corresponding identity for the coefficients of $K(z)$ is the one for $w=\iota$. But then \eqref{eq:eq:P2n3beta} implies $c(\mu)=d(\mu)$, a contradiction.
\end{proof}

\begin{proof}[Proof of Theorem \ref{thm:uniquenessadm}]
The first statement follows from Theorem \ref{thm:adm}. For the second one, let $\mathcal{P}$ be an admissible and symmetric collection. From Theorem \ref{thm:adm} and \eqref{eq:PsetinKset}, it is enough to show that
\[
\{P_x(z) \,|\, x\in W\} \supseteq \{K_w(z) \,|\, w\in W\}.
\]
 This follows from \ref{it:P5}, hence the statement.
\end{proof}

\begin{remark}
\label{rmk:admissibleAJK}
It is interesting to compare Theorem \ref{thm:uniquenessadm} with a result from \cite{akhmechet2023lattice}: It is shown there that there are infinitely many collections $\mathcal{P}$ satisfying \eqref{eq:Px1}, \ref{it:P1}, \ref{it:P2}, and \ref{it:P3} --- this result there is stated for $Q=A_1$ but holds similarly for arbitrary $Q$.
Thus Theorem \ref{thm:uniquenessadm} implies that only finitely many of such  collections $\mathcal{P}$ additionally satisfy \ref{it:P4}.
\end{remark}

\subsection{The $A_1$ case}
\label{sec:A1adm}
When $Q=A_1$, the series $K(z)$ from \eqref{eq:Wz} and its Weyl twist $K_{\iota}(z)$ by the action of $\iota\in \mathbb{S}_2$ from \eqref{eq:iota} are
\begin{align*}
K(z) &:= \sum_{i\geq 0} z^{-(2i+1)} & \mbox{and}&&
K_{\iota}(z) & := - \sum_{i\geq 0} z^{2i+1}.
\end{align*}
Since $W=\mathbb{S}_2$, the map $\overline{f}$ from Theorem \ref{thm:adm} is unique, hence the Kostant collection $\mathcal{K}$ is the only admissible collection.

\subsection{The $A_2$ case}
\label{sec:keyA2}

\begin{figure}[t]
\begin{tikzpicture}
\begin{rootSystem}{A}
\setlength\weightLength{1.5cm}
\weightLattice{3}
\node[] at \weight{0}{0} {\footnotesize\(1\)};
\node[] at \weight{0}{1} {\footnotesize\(2\)};
\node[] at \weight{0}{2} {\footnotesize\(3\)};
\node[] at \weight{0}{3} {\footnotesize\(4\)};
\node[] at \weight{1}{0} {\footnotesize\(1\)};
\node[] at \weight{2}{0} {\footnotesize\(1\)};
\node[] at \weight{3}{0} {\footnotesize\(1\)};
\node[] at \weight{1}{1} {\footnotesize\(2\)};
\node[] at \weight{2}{1} {\footnotesize\(2\)};
\node[] at \weight{1}{2} {\footnotesize\(3\)};
\node[] at \weight{-1}{1} {\footnotesize\(1\)};
\node[] at \weight{-1}{2} {\footnotesize\(2\)};
\node[] at \weight{-2}{2} {\footnotesize\(1\)};
\node[] at \weight{-1}{3} {\footnotesize\(3\)};
\node[] at \weight{-2}{3} {\footnotesize\(2\)};
\node[] at \weight{-3}{3} {\footnotesize\(1\)};
\end{rootSystem}
\end{tikzpicture}
\caption{The nonzero coefficients $k(\alpha)$ for the lattice  points of length at most $6$ in the root lattice $A_2$. 
}
\label{fig:A2kalpha}
\end{figure}

When $Q=A_2$, let $\alpha$ and  $\beta$ be the two simple roots. Then $\alpha,\beta$ and $\rho=\alpha+\beta$ are the three positive roots, and the admissible series $K(z)$ from \eqref{eq:Wz} is
\[
K(z) = \left(\sum_{i\geq 0} z^{-(2i+1)\alpha} \right)\left(\sum_{i\geq 0} z^{-(2i+1)\beta} \right)\left(\sum_{i\geq 0} z^{-(2i+1)\rho} \right).
\]
Since $\rho=\alpha+\beta$, a simple computation shows that this expands as
\begin{equation}
\label{eq:WzA2}
K(z) = \sum_{m,n\geq 0} \mathrm{min}\{m,n\}\,  z^{-2m\alpha - 2n\beta}.
\end{equation}
Indeed, the coefficient of $z^{-2m\alpha - 2n\beta}$ here follows from the computation
\begin{align*}
&|\left\{(i, j, k)\in 2\mathbb{N}+1 \,|\, i\alpha + j\beta+k\rho = 2m\alpha + 2n\beta\right\}|\\
&=|\left\{(i, j, k)\in 2\mathbb{N}+1 \,|\, (i+k)\alpha + (j+k)\beta = 2m\alpha + 2n\beta\right\}|\\ 
&=|\left\{k\in 2\mathbb{N}+1 \,|\, k< 2m \mbox{ and } k< 2n\right\}|\\ 
&= \mathrm{min}\{m,n\}.
\end{align*}
These values are represented in Figure \ref{fig:A2kalpha}.
Equivalently, the Kostant partition function from \S\ref{sec:Kz} for $A_2$ is 
\[
k(a\alpha +b\beta)=1+\mathrm{min}\{a,b\} \quad \mbox{for $a,b\geq 0$} 
\]
and vanishes otherwise (see Figure \ref{fig:A2kalpha}). Consequently, one has
\[
K_\iota(z) = -\sum_{m,n\geq 0} \mathrm{min}\{m,n\}\,  z^{2m\alpha +2n\beta}.
\]

Consider the collection $\mathcal{P}$ defined as in \eqref{eq:Padm} by
\begin{align*}
&P_{\alpha}(z) = P_{\beta}(z) = P_{\rho}(z) = K(z), \\
&P_{-\alpha}(z) = P_{-\beta}(z) = P_{-\rho}(z) = K_{\iota}(z).
\end{align*}
By Theorem \ref{thm:adm}, $\mathcal{P}$ is admissible. However $\mathcal{P}$ is not symmetric, as it is not equivalent to the unique symmetric collection $\mathcal{K}$ modulo the action of $W$.


\section{An invariant $q$-series}
\label{sec:qseries}

After defining Weyl assignments for reduced plumbing trees, we define a $q$-series and state the main theorem about its invariance. We conclude with a discussion of the relation with  \cite{gukov2021two, park2020higher, ri2023refined}.

\subsection{Weyl assignments}
\label{sec:chamberass}
For a reduced plumbing tree $\Gamma$ (as in \S\ref{sec:redGamma}) with framing matrix $B$ and a root lattice $Q$, a \textit{Weyl assignment} is a map
\[
\xi\colon V(\Gamma) \rightarrow W, \qquad v\mapsto \xi_v
\]
such that 
\begin{equation}
\label{eq:Wassconditiondeg012}
\xi_v = 1_W \qquad \mbox{if $\deg v \leq 2$},
\end{equation}
with $1_W$ being the identity element in $W$,
and such that the values on vertices across what we call \textit{forcing bridges} are coordinated by the following condition~\eqref{eq:forcingbridgescondition}. 

First, define a \textit{bridge} of $\Gamma$ to be a path in $\Gamma$ 
connecting two vertices, both of degree at least $3$, through a sequence of degree-$2$ vertices. 

Then, define a \textit{forcing bridge} of $\Gamma$ to be a bridge of $\Gamma$ that can be contracted down to a single vertex by a sequence of the Neumann moves  (A$\epsilon$) and (C) from Figure \ref{fig:Neumann}. A forcing bridge of $\Gamma$ between vertices $v$ and $w$ will be denoted by $\Gamma_{v,w}$.

As for contractible branches in \S\ref{sec:redGamma}, forcing bridges 
can be characterized using continued fractions.  
For a bridge $\Gamma_{v,w}$, let $u_1, \dots, u_n$ be the weights of the degree-$2$ vertices between $v$ to $w$ taken in this order. 

\begin{lemma}
A bridge $\Gamma_{v,w}$ is forcing if and only if $\llbracket u_1, \dots, u_n \rrbracket=0$.
\end{lemma}

\begin{proof}
The statement follows by Lemma \ref{lemma:contractiblepath} and the fact that the last Neumann move in the sequence that contracts $\Gamma_{v,w}$ is move (C).
\end{proof}

Finally, for a Weyl assignment $\xi$, one requires
\begin{equation}
\label{eq:forcingbridgescondition}
\xi_v  = \iota^{\Delta\pi(v,w)}  \xi_w \qquad \mbox{for every forcing bridge $\Gamma_{v,w}$}
\end{equation}
where $\iota\in W$ is as in \eqref{eq:iota}, and $\Delta\pi(v,w)$ is defined as the difference in numbers of positive eigenvalues
\begin{equation}
\label{eq:deltapibridge}
\Delta\pi(v,w) := \pi(B)- \pi(\overline{B})
\end{equation}
with $\overline{B}$ being the framing matrix of the plumbing tree obtained from $\Gamma$ after contracting $\Gamma_{v,w}$.

Define
\[
\Xi := \{\mbox{Weyl assignments $\xi$ on $\Gamma$}\}. 
\]
One has
\[
|\Xi| = |W|^n \qquad \mbox{where } n:= |\{v\in V(\Gamma) : \deg v\geq 3\}| - |\{\mbox{forcing bridges}\}|.
\]

\subsection{The $q$-series}
Let $M$ be a weakly negative-definite or weakly positive-definite plumbed $3$-man\-i\-fold. 
After a sequence of Neumann moves, one can assume that $M$ is constructed from a plumbing tree $\Gamma$ which is 
reduced and either weakly negative definite or weakly positive definite (see Lemma \ref{lemma:weakcontract}). 
For a root lattice $Q$,
select a representative $a$ of a generalized $\mathrm{Spin}^c$-structure 
\begin{equation}
\label{eq:a}
a\in \delta +2L' \otimes_{\mathbb{Z}} Q \subset L' \otimes_{\mathbb{Z}} Q,
\end{equation} 
a collection $\mathcal{P}$ of series as in \eqref{eq:P} with coefficients in a commutative ring $R$, and a subset $S\subseteq W^{V(\Gamma)}$. 
Define 
\begin{equation}
\label{eq:Ypa}
\mathsf{Y}_{\mathcal{P},S,a}\left(q\right):= 
(-1)^{|\Delta^+| \,\pi}
q^{\frac{1}{2}(3\sigma-\mathrm{tr}\,B)\langle \rho, \rho \rangle} \frac{1}{|S|}\sum_{\xi\in S}\,\sum_{\ell\in a + 2BL\otimes Q}
c_{\Gamma,\xi} (\ell)\,
q^{-\frac{1}{8}\langle \ell, \ell\rangle}
\end{equation}
where
\begin{align}
\label{eq:cGammaxiell}
c_{\Gamma, \xi} (\ell)  := \prod_{v\in V(\Gamma)} \left[ P_{\xi_v, \deg v}(z_v) \right]_{\ell_v} \in R.
\end{align}
Here $P_{x,n}$ denotes the element of $\mathcal{P}$ as in \S\ref{sec:admser}, the operator  $[\,\,]_\alpha$ assigns to a series in $z$ the coefficient of the monomial $z^\alpha$ for $\alpha\in Q$, and $\ell_v\in Q$ denotes the $v$-component of $\ell\in L'\otimes_{\mathbb{Z}} Q\cong Q^{V(\Gamma)}$ for $v\in V(\Gamma)$.

The series $\mathsf{Y}_{\mathcal{P},S,a}\left(q\right)$ naturally decomposes as
\[
\mathsf{Y}_{\mathcal{P},S,a}\left(q\right) = \frac{1}{|S|}\sum_{\xi\in S} \mathsf{Y}_{\mathcal{P},\xi,a}\left(q\right)
\]
where $\mathsf{Y}_{\mathcal{P},\xi,a}\left(q\right)$ is the series for the subset $\{\xi\}\subset W^{V(\Gamma)}$.

\begin{remark}
\label{rmk:YPa}
The sum in the series is over $\ell$ such that $\ell\equiv a$ mod $2BL\otimes Q$. As these $\ell$ are all the representative of the class of $a$ in the quotient space  $\mathsf{B}_Q(M)$ from \eqref{eq:BQM}, the series $\mathsf{Y}_{P,a}(q)$  depends on  $a$ at most up to its class in $\mathsf{B}_Q(M)$. 
Moreover, we show in Theorem \ref{thm:Winvqseries} that  $\mathsf{Y}_{\mathcal{P},S,a}(q)$ for $S=\Xi$ depends on  $a$ only up to its class in the  space $\mathsf{B}_Q^{W}(M)$, which is the quotient of $\mathsf{B}_Q(M)$ modulo the action of the Weyl group $W$.
\end{remark}

\begin{lemma}
\label{lemma:Laurent}
If the plumbing tree is  weakly negative definite, then the powers of $q$ in $\mathsf{Y}\left(q\right)$
 are bounded below, and for each power of $q$ there are only finitely many contributions to $\mathsf{Y}\left(q\right)$. 
In particular, one has
\begin{equation}
\label{eq:YRing}
\mathsf{Y}\left(q\right) 
\quad\in\quad q^{\frac{1}{2}(3\sigma-\mathrm{tr}\,B)\langle \rho, \rho \rangle -\frac{1}{8} \langle a, a \rangle} \frac{1}{|S|}\,R
\left(\! \left(q^{\frac{1}{2}}\right)\!\right).
\end{equation}
Similarly, if the plumbing tree is  weakly positive definite, then 
\begin{equation*}
\mathsf{Y}\left(q\right) 
\quad\in\quad q^{\frac{1}{2}(3\sigma-\mathrm{tr}\,B)\langle \rho, \rho \rangle -\frac{1}{8} \langle a, a \rangle} \frac{1}{|S|}\,R
\left(\! \left(q^{-\frac{1}{2}}\right)\!\right).
\end{equation*}
\end{lemma}

\begin{proof}
This is similar to the argument for the series $\widehat{Z}_a(q)$ from \cite{gukov2021two}.
The sum over $\ell$ can be decomposed as a sum over the entries of $\ell$ corresponding to vertices of degree at most $2$
 and the entries corresponding to vertices of degree at least $3$. For the former ones, there are only finitely many contributions due to the definition of admissible collections, see Remark \ref{rmk:n012}. For the latter ones, the boundedness of the exponents of $q$ and the finiteness of the contributions to each power of $q$ follow from the assumption that the plumbing tree is weakly negative definite. Recall from \S\ref{sec:ref} that this implies that the inverse $B^{-1}$ of the framing matrix is negative definite on the subspace spanned by the vertices of degree at least $3$.

Finally, the exponents of $q$ in \eqref{eq:YRing} follow from the fact that for an element $\ell\in a + 2BL\otimes Q$, one has $\langle \ell, \ell \rangle \in \langle a, a \rangle + 4\mathbb{Z}$.
\end{proof}

The Laurent ring in \eqref{eq:YRing} can be simplified for $Q=A_1$: since the pairing in $A_1$ is always even, one has $\langle \ell, \ell \rangle \in \langle a, a \rangle + 8\mathbb{Z}$, hence \eqref{eq:YRing} for $Q=A_1$ becomes
\[
\mathsf{Y}\left(q\right) \quad\in\quad q^{\frac{1}{4}(3\sigma-\mathrm{tr}\,B - a^2)} \frac{1}{|S|}\,R
\left(\! \left(q\right)\!\right).
\]
Here we use that $\langle \rho, \rho \rangle=\frac{1}{2}$ for $Q=A_1$.

\smallskip

Our  main result here is:

\begin{theorem}
\label{thm:seriesinvariancef}
For an admissible $\mathcal{P}$ and $S\subseteq\Xi$, 
any two reduced plumbing trees for $M$ which are related by a series of the five Neumann moves from Figure \ref{fig:Neumann}
yield the same series $\mathsf{Y}_{\mathcal{P},S,a}\left(q\right)$.
\end{theorem}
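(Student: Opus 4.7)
My plan is to verify invariance of $\mathsf{Y}_{P,a}(q)$ under each of the five Neumann moves individually. By \cite[Prop.~3.4]{ri2023refined}, two reduced plumbing trees related by a sequence of the five moves can be related by a sequence that never creates reducible vertices, so it suffices to check the five moves under this hypothesis on each pair of adjacent trees. For each move I will compare the series for the bottom tree $\Gamma$ with framing matrix $B$ and the top tree $\Gamma_\circ$ with framing matrix $B_\circ$, using the map $R\colon L'\otimes_{\mathbb{Z}}Q \to L'_\circ \otimes_{\mathbb{Z}}Q$ from \S\ref{sec:genspincstr} and its analog on $BL\otimes_{\mathbb{Z}} Q$. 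By Proposition \ref{prop:BQMinvariance2}, this already gives a bijection $\mathsf{B}_Q(M)\to \mathsf{B}_Q(M_\circ)$, so for a fixed representative $a$ the summation over $\ell \in a+2BL\otimes_{\mathbb{Z}} Q$ can be matched, after summing away the extra coordinates, with the summation over $\ell_\circ \in R(a)+2B_\circ L_\circ \otimes_{\mathbb{Z}} Q$.

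\textbf{Telescoping via admissibility.} For each move, the entries of $\ell_\circ$ indexed by vertices of $\Gamma_\circ$ not present in $\Gamma$ are to be summed away using the defining properties of admissible series. For moves (B$\pm$) the added leaf has degree $1$ with graded Weyl twist $\sum_{w\in W}(-1)^{\ell(w)} z^{2w(\rho)}$, and summing this against the shifted series at the adjacent vertex is precisely the identity \eqref{eq:A2piecewise} (combined with Lemma \ref{lem:Weyltwists}), which eliminates the leaf. For moves (A$\epsilon$) the inserted degree-$2$ vertex carries graded Weyl twist $1$, so its lattice-label sum enforces a linear relation between the contributions of $m_1$ and $m_2$; the endpoint degrees are unchanged when no reducible vertex is created, and the sign $-\epsilon$ in the formula \eqref{eq:Ra} for $R$ reflects the orientation flip that makes the pairings agree. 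For move (C) the new $0$-framed degree-$2$ vertex together with the shift parameter $\beta$ from \eqref{eq:gammadef} reduces again to \eqref{eq:A2piecewise} once the representative is rewritten inside $\delta_\circ + 2L'_\circ \otimes_{\mathbb{Z}} Q$.

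\textbf{Prefactor and Weyl assignments.} In parallel, the quantum prefactor must transform correctly. For each move one computes directly the changes $\mathrm{tr}\,B_\circ - \mathrm{tr}\,B$, $\sigma(B_\circ)-\sigma(B)$, and $\pi(B_\circ)-\pi(B)$, together with the difference $\langle R(\ell), R(\ell)\rangle - \langle \ell, \ell\rangle$ of the pairing in \eqref{eq:pairingL'Q}. These combine so that the factor $(-1)^{|\Delta^+|\pi}\, q^{\frac{1}{2}(3\sigma-\mathrm{tr}\,B)\langle \rho,\rho\rangle - \frac{1}{8}\langle\ell,\ell\rangle}$ is preserved modulo the contribution of the inserted or removed vertex, which in turn is absorbed by the admissibility identity. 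Finally one must match the Weyl assignments $\xi$ on the two trees: for (A$\pm$) and (B$\pm$) the added vertex has degree at most $2$ and hence carries $\xi_{\mathrm{new}} = 1_W$ by \eqref{eq:Wassconditiondeg012}, while for (C) one uses the forcing-bridge condition \eqref{eq:forcingbridgescondition} to propagate the Weyl element $\iota^{\Delta\pi(v,w)}$ across the new $0$-framed vertex and re-pair the two Weyl sums on $\Gamma$ and $\Gamma_\circ$.

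\textbf{Main obstacle.} I expect the essential difficulty to be move (C) when both vertices labelled $m_1$ and $m_2$ have degree at least $3$. This is the move that can alter the set of bridges and modify the signature $\sigma$ non-trivially, and it is where the forcing-bridge condition \eqref{eq:forcingbridgescondition} plays its genuine role: the sign $\iota^{\Delta\pi(v,w)}$ must absorb exactly the orientation sign inserted by $R$ in \eqref{eq:RaC}, while \eqref{eq:A2piecewise} eliminates the inserted degree-$2$ vertex. Moves (A$\pm$), (B$\pm$), and (C) with at least one endpoint of small degree reduce to essentially routine telescoping computations by comparison.
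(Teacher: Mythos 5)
Your overall strategy is the same as the paper's: reduce to the five moves that create no reducible vertices via \cite[Prop.~3.4]{ri2023refined}, transport representatives with the maps $R$ from \S\ref{sec:genspincstr}, sum away the components of $\ell_\circ$ at the inserted vertices, and balance the prefactor $(-1)^{|\Delta^+|\pi}q^{\frac{1}{2}(3\sigma-\mathrm{tr}\,B)\langle\rho,\rho\rangle}$ against the change in $\langle R(\ell),R(\ell)\rangle$. However, there is a concrete gap in your treatment of move (C), precisely the case you flag as the main obstacle. The identity that eliminates the split there is \emph{not} \eqref{eq:A2piecewise}: after the degree-$2$ vertex forces its component of $\ell_\circ$ to vanish (via $P^x_2=1$), what remains is the convolution
\[
\sum_{\beta} c_{x,p}\left(\ell_0+\beta\right)\,c_{x,q}\left(-\beta\right)=c_{x,p+q-2}(\ell_0),
\]
i.e.\ the multiplicativity $P^x_p(z)\,P^x_q(z)=P^x_{p+q-2}(z)$ of the graded Weyl twists (Lemma \ref{lemma:A4}). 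For $p,q\ge 3$ this rests on the definition of $P^x_n$ as the $(n-2)$-nd power of $P^x$ together with property \eqref{eq:supportass}, which guarantees the sum over $\beta$ is finite; property \eqref{eq:A2piecewise} only enters when one endpoint has degree $1$, which is essentially the (B)-move situation. As written, your move (C) argument does not close in the hard case, and you also need Lemma \ref{lemma:A3} to convert $c_{x,q}(-\beta)$ into $c_{\iota x,q}(\beta)$ so that the $\iota$-twisted Weyl assignment on the $v_2$-side is the one appearing in the sum.

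A secondary point: for moves (A$+$) and (C) the extra sign $(-1)^{|\Delta^+|}$ coming from $\pi_\circ=\pi+1$ is not ``absorbed by the admissibility identity.'' It is produced by twisting the Weyl assignment by $\iota$ on all vertices to one side of the inserted vertex and applying Lemma \ref{lemma:A3} there; the net sign is $(-1)^{|\Delta^+|\sum_v\deg v}$ over that side, and the parity count (each edge on that side is counted twice except the one meeting the inserted vertex) gives exactly one factor of $(-1)^{|\Delta^+|}$. This degree-parity argument, together with the compatibility of the $\iota$-twist with the forcing-bridge condition \eqref{eq:forcingbridgescondition} via $\Delta\pi_\circ(v,w)=\Delta\pi(v,w)+1$, is a step your proposal glosses over but which the proof genuinely needs.
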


Here $\Xi$ is the set of Weyl assignments from \S\ref{sec:chamberass}.
We prove the statement in \S\ref{sec:proofinv}.
Combining Lemma \ref{lemma:Laurent} and Theorem \ref{thm:seriesinvariancef}, we immediately deduce:

\begin{corollary}
For an admissible $\mathcal{P}$, a set $S\subseteq\Xi$, and 
 a reduced and refinable plumbing tree $\Gamma$, the series $\mathsf{Y}_{\mathcal{P},S,a}\left(q\right)$ is a Laurent series in either $q^{1/2}$ or $q^{-1/2}$ with integer coefficients, up to an overall factor that is a rational power of $q$.
\end{corollary}

\begin{remark}
\label{rmk:P=K}
Given a collection $\mathcal{P}$ that is admissible, the resulting series $\mathsf{Y}_{\mathcal{P},S,a}\left(q\right)$ may be obtained from the Kostant collection $\mathcal{K}$ from \S\ref{sec:Kz} together with a possibly different set $S' \subseteq \Xi$. Indeed, from Theorem \ref{thm:adm}, an admissible collection is uniquely determined by
\[
P_x(z) = K_{f(x)}(z)
\]
for some map of sets $f\colon W\rightarrow W$ such that $f(\iota x)= \iota f(x)$. It follows from its definition that the series $\mathsf{Y}_{\mathcal{P},S,a}\left(q\right)$ coincides with the series obtained for $\mathcal{P}=\mathcal{K}$ and some $S'\subseteq \Xi\cap \mathrm{Image}(f)^{V(\Gamma)}$. Moreover, in this case one has by definition
\begin{equation}
\label{eq:Wxwa}
\mathsf{Y}_{\mathcal{K}, \xi,a}(q) = \mathsf{Y}_{\mathcal{K}, w(\xi), w(a)}(q) \qquad \mbox{for $\xi\in \Xi$ and $w\in W$}.
\end{equation}
\end{remark}

\subsection{The $A_1$ case and Ri's series}
For $Q=A_1$, $\mathcal{P}=\mathcal{K}$ the Kostant collection, and $S=\Xi$,  the resulting series $\mathsf{Y}_{\mathcal{P},S,a}\left(q\right)$ coincides with the $q$-series from \cite{ri2023refined}.

\subsection{Relation with the series $\widehat{Z}_a(q)$}
\label{sec:Park}
Assume that $S=\Xi$ and the reduced weakly negative-definite plumbing tree $\Gamma$ has no forcing bridges (e.g., this is the case when $\Gamma$ has at most one vertex of degree at least $3$). Then for each $\ell$, the coefficients $c_{\Gamma,\xi} (\ell)$ from \eqref{eq:cGammaxiell} satisfy
\begin{equation}
\label{eq:c_gammanoforce}
\frac{1}{|\Xi|}\sum_{\xi\in \Xi}c_{\Gamma,\xi} (\ell) =  \prod_{v\in V(\Gamma)}   \left( \frac{1}{|W|} \sum_{x\in W}  \left[ P_{x,\deg v}(z_v) \right]_{\ell_v} \right).
\end{equation}
This is due to the fact that in the absence of forcing bridges, the values of the Weyl assignments on various vertices do not need to be coordinated as in \eqref{eq:forcingbridgescondition}; and the fact that for $n\in\{0,1,2\}$, the series $P_{x,n}(z)$ is independent of~$x$, and thus
\begin{equation}
\label{eq:aven012}
P_{x,n}(z)=\frac{1}{|W|}\sum_{x\in W}P_{x,n}(z) \qquad \mbox{for $n\in\{0,1,2\}$}.
\end{equation}

Now, consider the case $\mathcal{P}=\mathcal{K}$ as in \S\ref{sec:Kz}.

\begin{lemma}
For arbitrary $n\geq 0$ and $\alpha\in Q$, one has
\begin{align*}
\begin{split}
&\frac{1}{|W|}\left[\sum_{x\in W} K_{x,n}(z) \right]_\alpha\\
&= \mathsf{v.p.} \oint_{|z_1|=1} \dots  \mathsf{v.p.} \oint_{|z_r|=1} 
\left( \sum_{w\in W} (-1)^{\ell(w)}\, z^{2w(\rho)} \right)^{2-n}
 z^{-\alpha} \,\prod_{k=1}^r  \frac{dz_k}{2\pi \mathrm{i} z_k}.
\end{split}
\end{align*}
\end{lemma}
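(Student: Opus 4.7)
The proof splits into two cases depending on whether $n\le 2$ or $n\ge 3$.

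For $n\in\{0,1,2\}$, the graded Weyl twist $W^x_n(z)$ is independent of $x$ by its definition in \S\ref{sec:grWtwist}, so the left-hand side is simply the coefficient of $z^\alpha$ in the Laurent polynomial $\Pi(z)^{2-n}$, where $\Pi(z):=\sum_{w\in W}(-1)^{\ell(w)}\,z^{2w(\rho)}$. On the right-hand side the integrand is this Laurent polynomial times $z^{-\alpha}\prod_k(2\pi\mathrm{i}\,z_k)^{-1}$, regular on the unit torus, so the principal value coincides with the ordinary Cauchy integral and extracts the same coefficient. Both sides match.

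For $n\ge 3$, the key step is to identify $W^x_n(z)$ with a Laurent expansion of the meromorphic function $\Pi(z)^{-(n-2)}$. By \eqref{eq:Weyldenomformula}, $\Pi(z)=\prod_{\alpha\in\Delta^+}(z^\alpha-z^{-\alpha})$, and each factor $(z^\alpha-z^{-\alpha})^{-1}$ has the geometric expansion $\sum_{i\ge 0}z^{-(2i+1)\alpha}$ in the region $\{|z^\alpha|>1\}$. Multiplying, $W(z)$ in \eqref{eq:Wz} is the Laurent expansion of $\Pi(z)^{-1}$ converging in the dominant cone
\[
D_+ := \{z\in(\mathbb{C}^*)^r : |z^\alpha|>1 \text{ for all }\alpha\in\Delta^+\}.
\]
Under the substitution $z^\beta\mapsto z^{x(\beta)}$ the positive roots are permuted with $\ell(x)$ sign changes, so $\Pi(z)$ acquires a factor $(-1)^{\ell(x)}$; this cancels the sign in \eqref{eq:Pw}, exhibiting $W^x(z)$ as the Laurent expansion of the same meromorphic function $\Pi(z)^{-1}$ in the Weyl chamber $D_x:=\{|z^\beta|>1 : \beta\in x(\Delta^+)\}$. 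Raising to the $(n-2)$-th power, $W^x_n(z)$ is the Laurent expansion of $\Pi(z)^{-(n-2)}$ in $D_x$, and its coefficient of $z^\alpha$ is computed by the Cauchy integral
\[
[W^x_n(z)]_\alpha = \oint_{|z_k|=\rho_k} \Pi(z)^{-(n-2)}\, z^{-\alpha}\,\prod_{k=1}^{r}\frac{dz_k}{2\pi\mathrm{i}\,z_k}
\]
over any product torus with $(\log\rho_1,\dots,\log\rho_r)\in D_x$.

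Averaging over $x\in W$ gives $|W|^{-1}$ times the sum of these $|W|$ chamber integrals. The chambers $\{D_x\}_{x\in W}$ tile the complement of the root hyperplanes in $\log|(\mathbb{C}^*)^r|=\mathbb{R}^r$, so letting each $\rho_k\to 1$ turns the chamber integrals into $|W|$ local deformations of the unit-torus contour symmetrically surrounding the singular locus of $\Pi(z)^{-(n-2)}$. By the one-variable Sokhotski--Plemelj identity $\tfrac12(\oint_{|z|=1+\epsilon}+\oint_{|z|=1-\epsilon})\to\mathsf{v.p.}\oint_{|z|=1}$ applied coordinate by coordinate, this symmetrized average is precisely the iterated Cauchy principal value on the right-hand side.

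\textbf{Main obstacle.} The last identification of the $W$-averaged chamber integrals with the iterated principal value is the technical heart of the argument. For simple poles (type $A_1$, or more generally $n=3$) it reduces to the classical Sokhotski--Plemelj formula, as can be checked explicitly against the calculations of \S\ref{sec:A1adm}. For higher rank and higher-order poles ($n\ge 4$), a clean route is induction on the rank: peel off a simple-root factor $(z^{\alpha_j}-z^{-\alpha_j})$ of $\Pi$, integrate with respect to the corresponding coordinate using the one-variable identity with the Hadamard finite-part interpretation of the principal value, and reduce to a root subsystem of lower rank.
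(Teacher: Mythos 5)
Your case analysis for $n\in\{0,1,2\}$ and your identification of $W^x_n(z)$ with the Laurent expansion of $\bigl(\sum_{w}(-1)^{\ell(w)}z^{2w(\rho)}\bigr)^{-(n-2)}$ in the chamber $D_x$ are correct, and the latter is in fact the entire content of the paper's proof for $n\ge 3$: in the paper (following Park), the symbol $\mathsf{v.p.}$ for a singular integrand is \emph{defined} as the average of the coefficients of $z^\alpha$ over the various Weyl-chamber series expansions of the integrand, so once the chamber expansions are identified with the $W^x_n(z)$, the lemma follows immediately from \eqref{eq:aven012} and the definition in \S\ref{sec:grWtwist}. You have therefore proved the statement as the paper intends it by the end of your third paragraph.

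The genuine gap is in your final step, which you flag as the ``main obstacle'' but do not close: the claim that the $W$-averaged chamber integrals converge to an honest iterated Cauchy principal value over the unit torus. This is a strictly stronger analytic assertion than the lemma requires, and as sketched it does not go through. First, for $n\ge 4$ the integrand has poles of order $n-2\ge 2$ along $\{z^\beta=\pm 1\}$, and the classical Sokhotski--Plemelj symmetrization only regularizes simple poles; invoking a Hadamard finite part changes the regularization being used and would itself need to be reconciled with the chamber average. Second, the singular locus is the union of the subtori $\{z^\beta=\pm 1\}$ over $\beta\in\Delta^+$, which for rank $\ge 2$ is not a union of coordinate subtori $\{z_k=\text{const}\}$, so a ``coordinate by coordinate'' application of the one-variable identity does not directly apply, and an iterated principal value would a priori depend on the order of integration. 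If you instead simply adopt the paper's convention that the right-hand side \emph{means} the average of the chamber-expansion coefficients, your argument is complete and coincides with the paper's; if you insist on the analytic interpretation, the last step needs a substantially more careful treatment than the induction you outline.
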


\begin{proof}
Here $\mathsf{v.p.}$ stands for the principal value (\textit{valeur principale} in French) of the integral and is computed as follows. For $n\in\{0,1,2\}$, the term
\begin{equation}
\label{eq:integrandmono}
\left( \sum_{w\in W} (-1)^{\ell(w)}\, z^{2w(\rho)} \right)^{2-n}
\end{equation}
expands as a finite sum, and the $\mathsf{v.p.}$ integral is simply the regular integral. 
Hence, for $\alpha\in Q$, the right-hand side 
equals the coefficient of $z^\alpha$ in the expansion of \eqref{eq:integrandmono}, and 
the lemma holds by \eqref{eq:aven012} and the definition of $\mathcal{K}$. 

For $n\geq 3$, applying the Weyl denominator formula \eqref{eq:Weyldenomformula}, one sees that the term \eqref{eq:integrandmono} is singular when $z^\beta=\pm 1$ for $\beta \in \Delta^+$. In this case, the term \eqref{eq:integrandmono} admits various series expansions, one in each Weyl chamber. For $\alpha\in Q$, the right-hand side 
is defined as the average of the coefficients of $z^\alpha$ among these various series expansions. As the various series expansions coincide with $K_{x,n}(z)$ for $x\in W$, the lemma holds.
\end{proof}

It follows that \eqref{eq:c_gammanoforce} can be expressed as a product of v.p.~integrals. In particular, when the plumbing tree $\Gamma$ is reduced weakly negative definite  and has no forcing bridges, the series $\mathsf{Y}_{W,a}\left(q\right)$ recovers the series $\widehat{Z}_a(q)$ from \cite{park2020higher}. And for $Q=A_1$, this equals the series $\widehat{Z}_a(q)$ from \cite{gukov2020bps, gukov2021two}.


\section{Invariance of the $q$-series}
\label{sec:proofinv}

Here we consider the series $\mathsf{Y}\left(q\right):=\mathsf{Y}_{\mathcal{P},S,a}\left(q\right)$ with $\mathcal{P}$ admissible as in \S\ref{sec:admser} and $S=W^{V(\Gamma)}$.
We first prove the invariance of the series $\mathsf{Y}\left(q\right)$ with respect to the action of the Weyl group and then prove Theorem \ref{thm:seriesinvariancef}.

Recall the coefficients $c_{\Gamma, \xi}(\ell)$ from \eqref{eq:cGammaxiell}.

\begin{theorem}
\label{thm:Winvqseries}
If $\mathcal{P}$ is symmetric and $S=\Xi$,
then for $\ell\in \delta +2L' \otimes_{\mathbb{Z}} Q \subset L' \otimes_{\mathbb{Z}} Q$, one has
\[
\sum_{\xi\in \Xi}c_{\Gamma,\xi}(\ell)=\sum_{\xi\in \Xi} c_{\Gamma,\xi}(w(\ell)) \qquad \mbox{for  $w\in W$}.
\]
In particular, when it exists, the series $\mathsf{Y}\left(q\right)$ is invariant by the action of the Weyl group $W$, that is, $\mathsf{Y}_{\mathcal{P},\Xi,a}\left(q\right)=\mathsf{Y}_{\mathcal{P},\Xi,w(a)}\left(q\right)$,  for $w\in W$.
\end{theorem}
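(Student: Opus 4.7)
The plan is to combine Lemma \ref{lemma:A3+}, which controls how the coefficients $c_{x,n}$ transform under $W$, with a reparametrization of the sum over Weyl assignments that amounts to left-multiplying $\xi$ by $w$. Given $w \in W$, I would define a map $\sigma_w \colon \Xi \to \Xi$ by
\[
\sigma_w(\xi)_v = \begin{cases} w\,\xi_v & \text{if } \deg v \geq 3,\\ 1_W & \text{if } \deg v \leq 2.\end{cases}
\]
Condition \eqref{eq:Wassconditiondeg012} is automatic. For the forcing-bridge condition \eqref{eq:forcingbridgescondition}, note that $\iota\in W$ acts as $-1$ on $V$, hence is central in $W$; therefore if $\xi_v = \iota^{\Delta\pi(v,u)} \xi_u$ along a forcing bridge $\Gamma_{v,u}$, then $\sigma_w(\xi)_v = w\xi_v = w\iota^{\Delta\pi(v,u)}\xi_u = \iota^{\Delta\pi(v,u)} w\xi_u = \iota^{\Delta\pi(v,u)} \sigma_w(\xi)_u$. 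Hence $\sigma_w$ is a well-defined bijection of $\Xi$.

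Next I would apply Lemma \ref{lemma:A3+} vertex by vertex:
\[
[P^{\xi_v}_{\deg v}(z_v)]_{\ell_v} = (-1)^{\ell(w)\deg v}\,[P^{w\xi_v}_{\deg v}(z_v)]_{w(\ell_v)}.
\]
For $\deg v \leq 2$ the graded Weyl twist is independent of its upper index (see \S\ref{sec:grWtwist}), so $w\xi_v$ on the right may be harmlessly replaced by $\sigma_w(\xi)_v = 1_W$; for $\deg v \geq 3$ we already have $w\xi_v = \sigma_w(\xi)_v$. Taking the product over $v \in V(\Gamma)$, the signs collect to
\[
(-1)^{\ell(w)\sum_v \deg v} = (-1)^{2\ell(w)|E(\Gamma)|} = +1,
\]
using that $\sum_v \deg v = 2|E(\Gamma)|$. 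Substituting $\xi \mapsto \sigma_w^{-1}(\xi)$ in the outer sum then gives
\[
c_\Gamma(\ell) = \frac{1}{|\Xi|}\sum_{\xi \in \Xi} \prod_{v} [P^{\sigma_w(\xi)_v}_{\deg v}(z_v)]_{w(\ell_v)} = c_\Gamma(w(\ell)),
\]
which is the first assertion of the theorem.

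For the second assertion, I would perform the change of variables $\ell \mapsto w(\ell)$ in the series. Since $W$ acts componentwise on $L'\otimes_\mathbb{Z} Q$ and preserves $Q$, it preserves the sublattice $2BL\otimes_\mathbb{Z} Q$; thus the map $\ell \mapsto w(\ell)$ is a bijection $a + 2BL\otimes Q \to w(a) + 2BL\otimes Q$. Weyl invariance of the pairing on $Q$ gives $\langle w(\ell), w(\ell)\rangle = \langle \ell,\ell\rangle$ via \eqref{eq:pairingL'Q}, and the prefactor $(-1)^{|\Delta^+|\pi} q^{\frac{1}{2}(3\sigma - \mathrm{tr}\,B)\langle \rho,\rho\rangle}$ depends only on $\Gamma$. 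Combining these with $c_\Gamma(w(\ell)) = c_\Gamma(\ell)$ from the first part yields $\mathsf{Y}_{P,w(a)}(q) = \mathsf{Y}_{P,a}(q)$.

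The main obstacle is the first step: verifying that $\sigma_w$ is a bijection of $\Xi$, i.e., preserves the forcing-bridge constraint, and simultaneously that Lemma \ref{lemma:A3+} applies uniformly across vertices of all degrees (including the low-degree ones where $\xi_v$ is pinned to $1_W$). The first point hinges on the centrality of $\iota$; the second on the harmless nature of the low-degree factors, where $P^x_n$ is independent of $x$. Once both are in hand, the parity identity $\sum_v \deg v \equiv 0 \pmod 2$ ensures the signs from Lemma \ref{lemma:A3+} collapse, and the rest is a bookkeeping reindexing.
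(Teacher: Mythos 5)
Your proof is correct and follows essentially the same route as the paper: apply Lemma \ref{lemma:A3+} at each vertex, cancel the signs via $\sum_{v}\deg v = 2|E(\Gamma)|$ being even, and reindex the average over Weyl assignments, with the second assertion following from the $W$-invariance of the pairing \eqref{eq:pairingL'Q}. Your explicit check that $\xi\mapsto w\xi$ preserves the forcing-bridge condition \eqref{eq:forcingbridgescondition} (via the centrality of $\iota$) and your handling of the low-degree vertices are details the paper leaves implicit in the phrase ``averaging over all Weyl assignments.''
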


\begin{proof}
The first part of the statement follows after multiplying the identity in property \ref{it:P5} over all vertices in the reduced plumbing tree $\Gamma$, summing over all $\xi\in \Xi$, and applying the identity 
\[
\prod_{v\in V(\Gamma)} (-1)^{\deg v} =1.
\]
This identity follows since $\sum_{v\in V(\Gamma)} \deg v$ is even, as every edge of $\Gamma$ is incident to two vertices in $\Gamma$.
Moreover, the last part of the statement follows from the fact that $\langle \ell, \ell\rangle=\langle w(\ell), w(\ell)\rangle$ for $w\in W$, hence the exponent of $q$ is also invariant by the action of $W$.
\end{proof}

We now prove Theorem \ref{thm:seriesinvariancef}:

\begin{proof}[Proof of Theorem \ref{thm:seriesinvariancef}]
Let $M$ be a weakly negative-definite plumbed $3$-man\-i\-fold, and let $Q$ be a root lattice. Select an admissible collection $\mathcal{P}$ for~$Q$.
We verify that any two reduced plumbing trees for $M$ which are related by a series of the five Neumann moves from Figure \ref{fig:Neumann}
yield the same series $\mathsf{Y}_{a}\left(q\right)$ for all representatives $a$ of a generalized $\mathrm{Spin}^c$-structure for $Q$. We use the fact that any two reduced plumbing trees for $M$ which are related by a series of the five Neumann moves from Figure \ref{fig:Neumann}
 are in fact related by a sequence of the Neumann moves from Figure \ref{fig:Neumann}  which do not create any reducible vertices \cite[Prop.~3.4]{ri2023refined}.

For each such move, we argue that the two $q$-series arising from the two plumbing trees are equal.
As in the proof of Proposition~\ref{prop:BQMinvariance}, we use the notation $B\colon L\hookrightarrow L'$ and $\delta$ for the terms related to the bottom plumbing tree $\Gamma$, and the notation
\mbox{$B_\circ \colon L_\circ \hookrightarrow L_\circ'$} and $\delta_\circ$ for the corresponding terms related to the top plumbing  tree $\Gamma_\circ$.
The signatures of $B$ and $B_\circ$ will be denoted by $\sigma$ and $\sigma_\circ$, respectively, and the numbers of positive eigenvalues of $B$ and $B_\circ$ will be denoted by $\pi$ and $\pi_\circ$, respectively. 

Select a representative of a generalized $\mathrm{Spin}^c$-structure $a$ for the bottom plumbing tree.
For each move, we start by observing how the factor 
\begin{equation}
\label{eq:frontfactor}
(-1)^{|\Delta^+| \,\pi}
q^{\frac{1}{2}(3\sigma-\mathrm{tr}\,B)\langle \rho, \rho \rangle} 
\end{equation}
in front of the sum in the series changes under the move. Afterwards, we focus on the sum over the various representatives $\ell\in a + 2BL\otimes Q$ of the generalized $\mathrm{Spin}^c$-structure. For this, recall from Proposition \ref{prop:BQMinvariance2} that the space of generalized $\mathrm{Spin}^c$-structures is invariant under the Neumann moves. However, for each move, the column space of $B$ is isomorphic to some subspace of the column space of $B_\circ$, see Remark \ref{rmk:columnspace}. It follows that for each representative $\ell$ of the generalized $\mathrm{Spin}^c$-structure for the bottom tree, there is a corresponding affine space of generalized $\mathrm{Spin}^c$-structures for the top plumbing tree. Thus for each move, we argue that the contribution of each $\ell$ for the bottom plumbing tree equals the sum of the contributions of the elements in the corresponding affine space for the top plumbing tree. 

Moreover, for each move, we use the assumption that $\Gamma$ and $\Gamma_\circ$ are reduced to show that their sets of Weyl assignments are isomorphic. Then as the series for $S\subseteq \Xi$ is the average of the series for the $1$-element subsets of $S$, it is enough to prove the statement when $S$ has size $1$.

\bigskip

\noindent \textit{Step (A$-$): The Neumann move (A$-$) from Figure \ref{fig:Neumann}.}
There exists an extra term in the quadratic form corresponding to $B_\circ$ with respect to the quadratic form corresponding to $B$ given by
\[
-x_0^2 - x_1^2 - x_2^2 +2x_0x_1 + 2x_0x_2 -2x_1x_2=
-(x_0-x_1-x_2)^2,
\]
where $x_0$ is the variable corresponding to the added vertex and $x_1$ and $x_2$ are the variables corresponding to its two adjacent vertices in $\Gamma_\circ$. It follows that 
\[
\sigma_\circ=\sigma-1 \qquad \mbox{and} \qquad \pi_\circ = \pi. 
\]
Since $\mathrm{tr}\, B_\circ = \mathrm{tr}\, B -3$, one has $3\sigma_\circ - \mathrm{tr}\, B_\circ = 3\sigma - \mathrm{tr}\, B$. We conclude that the factor \eqref{eq:frontfactor} in front of the sum in the series is invariant under this move.

Next, we consider the sum in the series.
Recall the function $R$ from \eqref{eq:Ra} with $\epsilon = -$:
\begin{equation*}
R\colon L'\otimes_{\mathbb{Z}} Q \rightarrow L'_\circ \otimes_{\mathbb{Z}} Q, \qquad
(a_1, a_2)\mapsto (a_1, 0,  a_2).
\end{equation*}
Here the subtuple $a_1$ corresponds to the vertices of $\Gamma$ consisting of the vertex labeled by $m_1$ and all vertices on its left.  The subtuple $a_2$ corresponds to the vertices of $\Gamma$ consisting of the vertex labeled by $m_2$ and all vertices on its right. The $0$ entry corresponds to the added vertex in $\Gamma_\circ$.
(See Remark \ref{rmk:leftright} about the determination of left and right parts of the trees.)
For \mbox{$a\in \delta +2L' \otimes_{\mathbb{Z}} Q$,} define 
$a_\circ:= R(a)\in \delta_\circ +2L'_\circ \otimes_{\mathbb{Z}} Q$.

Since $\Gamma_\circ$ does not have a new vertex  of degree $\geq 3$, nor has it a new forcing bridge, the top and bottom plumbing trees have isomorphic sets of Weyl assignments. For a Weyl assignment $\xi$ on $\Gamma$, let $\xi_\circ$ be the naturally induced Weyl assignment on $\Gamma_\circ$.

The added vertex in $\Gamma_\circ$ has degree $2$. From \ref{it:P1}, we deduce that for $\ell_\circ\in a_\circ + 2B_\circ L_\circ\otimes_{\mathbb{Z}} Q$,
one has $c_{\Gamma_\circ, \xi_\circ}(\ell_\circ)=0$ when the component of $\ell_\circ$ corresponding to the added vertex is non-zero. Hence, we can restrict the sum in the series for $\Gamma_\circ$ over only those $\ell_\circ$ which are of type $\ell_\circ= R(\ell)$ for some $\ell\in a + 2B L\otimes_{\mathbb{Z}} Q$. 
As $R$ is injective, it will be enough to verify that the contribution of $\ell\in a + 2B L\otimes_{\mathbb{Z}} Q$ in the series for $\Gamma$ equals the contribution of $R(\ell)$ in the series for $\Gamma_\circ$.

From \ref{it:P1}, one has $c_{\Gamma_\circ, \xi_\circ}(R(\ell))=c_{\Gamma, \xi}(\ell)$.
Moreover, a direct computation shows that 
\[
B^{-1}
\ell
 =
\left(h_1,  h_2 \right)
\quad \Rightarrow \quad 
B_\circ^{-1}
R(\ell)
=
\left(h_1, h_0, h_2\right)
\]
for some $h_0$. (Specifically, $h_0$ is the sum of the entry of $h_1$ and the entry of $h_2$ corresponding to the two vertices adjacent to the added vertex in $\Gamma_\circ$. However, the explicit expression of $h_0$ will not be needed below.)
This implies that writing $\ell=(\ell_1, \ell_2)$, one has
\begin{equation}
\label{eq:RRllA-}
\langle R(\ell), R(\ell) \rangle = (\ell_1, 0, \ell_2)^t (h_1, h_0, h_2) =\langle \ell, \ell \rangle.
\end{equation}
 
We conclude that 
\begin{equation}
\label{eq:conteqA-}
c_{\Gamma, \xi}(\ell) \,
q^{-\frac{1}{8}\langle \ell, \ell\rangle} =
c_{\Gamma_\circ, \xi_\circ}(R(\ell))\,
q^{-\frac{1}{8}\langle R(\ell), R(\ell)\rangle}.
\end{equation}
Hence the contribution of $\ell$ in the series for $\Gamma$ equals the contribution of $R(\ell)$ in the series for $\Gamma_\circ$.
This implies the statement for this move.

\bigskip

\noindent \textit{Step (A$+$): The Neumann move (A$+$) from Figure \ref{fig:Neumann}.}
 In this case, one has
\[
\sigma_\circ=\sigma +1, \qquad \pi_\circ = 1+ \pi , \qquad 3\sigma_\circ - \mathrm{tr}\, B_\circ =  3\sigma - \mathrm{tr}\, B. 
\]
We conclude that the factor \eqref{eq:frontfactor} in front of the sum in the series for $\Gamma_\circ$ has an extra factor $(-1)^{|\Delta^+|}$.

Next, we use the function $R$ from \eqref{eq:Ra} this time with $\epsilon = +$:
\begin{equation*}
R\colon L'\otimes_{\mathbb{Z}} Q \rightarrow L'_\circ \otimes_{\mathbb{Z}} Q, \qquad
(a_1, a_2)\mapsto (a_1, 0,  -a_2).
\end{equation*}
For \mbox{$a\in \delta +2L' \otimes_{\mathbb{Z}} Q$,} define 
$a_\circ:= R(a)\in \delta_\circ +2L'_\circ \otimes_{\mathbb{Z}} Q$.

As for the previous move, the sets of Weyl assignments for the two plumbing trees are isomorphic. The natural isomorphism is defined as follows. For a Weyl assignment  $\xi$ for the bottom plumbing tree,  define a Weyl assignment $\xi_\circ$ for the top plumbing tree such that for a vertex $v$ with $\deg v\geq 3$, one has
\begin{equation}
\label{eq:identifXiA+}
\xi_\circ \colon v \mapsto \left\{
\begin{array}{ll}
\xi_v & \mbox{if $v$ is on the left of the added vertex,}\\[0.5pt]
\iota \xi_v & \mbox{if $v$ is on the right of the added vertex.} 
\end{array}
\right.
\end{equation}
Here, $\iota\in W$ is as in \eqref{eq:iota}.
Since the added vertex has degree $2$, the value of $\xi_\circ$ at the added vertex is $1_W$, as determined by \eqref{eq:Wassconditiondeg012}.

Note that when the added vertex is on a forcing bridge $\Gamma_{v,w}$, the definition of $\xi_\circ$ via \eqref{eq:identifXiA+} is compatible with the condition \eqref{eq:forcingbridgescondition}, since 
\[
\Delta\pi_\circ(v,w) = \Delta\pi(v,w) +1
\]
where $\Delta\pi(v,w)$ and $\Delta\pi_\circ(v,w)$ are the differences in numbers of positive eigenvalues obtained from the contraction of the bridge $\Gamma_{v,w}$  in $\Gamma$ and $\Gamma_\circ$, respectively, as in \eqref{eq:deltapibridge}.

As for the previous move, we can restrict the sum in the series for $\Gamma_\circ$ over only those $\ell_\circ$ which are of type $\ell_\circ= R(\ell)$ for some $\ell\in a + 2B L\otimes_{\mathbb{Z}} Q$. In this case, we have 
\begin{equation}
\label{eq:fgammaB+}
c_{\Gamma, \xi}(\ell) = (-1)^{|\Delta^+|} c_{\Gamma_\circ, \xi_\circ}(R(\ell)).
\end{equation}
This follows from \ref{it:P3}, the definition of $\xi_\circ$, and the fact that 
\begin{equation}
\label{eq:oddsumdegv}
\prod_{v\in V_2(\Gamma_\circ)}(-1)^{\deg v}=-1 
\end{equation}
where $V_2(\Gamma_\circ)$ is the set of all vertices of $\Gamma_\circ$ on the right of the added vertex.
Indeed, one has that $\sum_{v\in V_2(\Gamma_\circ)} \deg v$ is odd, since every edge on the right of the added vertex in $\Gamma_\circ$ is incident to two vertices in $V_2(\Gamma_\circ)$ with the exception of the edge incident to the added vertex, which is incident to only one vertex in $V_2(\Gamma_\circ)$.
The factor $(-1)^{|\Delta^+|}$ in \eqref{eq:fgammaB+} matches the extra contribution to the factor \eqref{eq:frontfactor} in front of the sum in the series for $\Gamma_\circ$. That is, we have
\begin{equation*}
(-1)^{|\Delta^+|\, \pi} c_{\Gamma, \xi}(\ell) = (-1)^{|\Delta^+|\, \pi_\circ} c_{\Gamma_\circ, \xi_\circ}(R(\ell)).
\end{equation*}

A direct computation shows that 
\[
B^{-1}
\ell
 =
\left(h_1,  h_2 \right)
\quad \Rightarrow \quad 
B_\circ^{-1}
R(\ell)
=
\left(h_1, h_0, -h_2\right)
\]
for some $h_0$. (Specifically, $h_0$ is minus the sum of the entry of $h_1$ and the entry of $-h_2$ corresponding to the two vertices adjacent to the added vertex in $\Gamma_\circ$; however, the formula for $h_0$ will not be needed below.)
This implies that 
\begin{equation}
\label{eq:RRllA+}
\langle R(\ell), R(\ell) \rangle = (\ell_1, 0, -\ell_2)^t (h_1, h_0, - h_2) =\langle \ell, \ell \rangle.
\end{equation}
 
We conclude that
\begin{equation}
\label{eq:conteqA+}
(-1)^{|\Delta^+|\, \pi} c_{\Gamma, \xi}(\ell) \,
q^{-\frac{1}{8}\langle \ell, \ell\rangle} =
(-1)^{|\Delta^+|\, \pi_\circ} c_{\Gamma_\circ, \xi_\circ}(R(\ell))\,
q^{-\frac{1}{8}\langle R(\ell), R(\ell)\rangle}.
\end{equation}
Hence the contribution of $\ell$ in the series for $\Gamma$ equals the contribution of $R(\ell)$ in the series for $\Gamma_\circ$.
Since $R$ is injective, the statement for this move follows.

\bigskip

\noindent \textit{Step (B$-$): The Neumann move (B$-$) from Figure \ref{fig:Neumann}.} 
In this case, one has
\[
\sigma_\circ=\sigma-1, \qquad \pi_\circ = \pi, \qquad 3\sigma_\circ - \mathrm{tr}\, B_\circ = -1+ 3\sigma - \mathrm{tr}\, B. 
\]
We conclude that the factor \eqref{eq:frontfactor} in front of the sum in the series for $\Gamma_\circ$ has an extra factor $q^{-\frac{1}{2}\langle \rho, \rho\rangle}$.

For a choice of $w\in W$, consider the function:
\begin{equation}
\label{eq:RwB-}
R_w\colon L'\otimes_{\mathbb{Z}} Q \rightarrow L'_\circ \otimes_{\mathbb{Z}} Q, \qquad
(a_\sharp, a_1)\mapsto (a_\sharp, a_1+2w(\rho),  - 2 w(\rho))
\end{equation}
with entry $a_1$ corresponding to the vertex of $\Gamma$ labeled by $m_1$, subtuple $a_\sharp$ corresponding to all other vertices of $\Gamma$,
and entry $- 2 w(\rho)$ on the right-hand side corresponding to the added vertex in $\Gamma_\circ$.
Note that the function $R$ from \eqref{eq:RaB} with $\epsilon = -$ is $R=R_{w}$ with $w=1_W$.
For $a\in \delta +2L' \otimes_{\mathbb{Z}} Q$, define 
$a_\circ:= R(a)\in \delta_\circ +2L'_\circ \otimes_{\mathbb{Z}} Q$.

The added vertex in $\Gamma_\circ$ has degree $1$. From \eqref{eq:Px1}, 
we deduce that for $\ell_\circ\in a_\circ + 2B_\circ L_\circ\otimes_{\mathbb{Z}} Q$,
one has $c_{\Gamma_\circ, \xi_\circ}(l_\circ)=0$ when the component of $\ell_\circ$ corresponding to the added vertex is not in the orbit $-2W(\rho)$. Hence, we can restrict the sum in the series for $\Gamma_\circ$ over only those $\ell_\circ$ which are of type $\ell_\circ= R_w(\ell)$ for some $\ell\in a + 2B L\otimes_{\mathbb{Z}} Q$ and some $w\in W$. 
Note that for $\ell\in a + 2B L\otimes_{\mathbb{Z}} Q$ and $w\in W$, one indeed has
\[
R_w(\ell) \in a_\circ + 2B_\circ L_\circ\otimes_{\mathbb{Z}} Q.
\]

Let $n$ be the degree of the vertex in $\Gamma_\circ$ adjacent to the added vertex. Then the degree of the corresponding vertex in $\Gamma$ is $n-1$. 
The assumption that this Neumann move does not create a reducible vertex implies $n\neq 3$.
It follows that $\Gamma_\circ$ does not have a new vertex of degree at least $3$, nor has it a new forcing bridge, 
hence the top and bottom plumbing trees have isomorphic sets of Weyl assignments.

For $\ell\in a + 2B L\otimes_{\mathbb{Z}} Q$, write $\ell=(\ell_\sharp, \ell_1)$.
Select a Weyl assignment $\xi$ on $\Gamma$, let $\xi_\circ$ be the corresponding Weyl assignment on $\Gamma_\circ$, and let $x\in W$ be the value of $\xi_\circ$ at the vertex in $\Gamma_\circ$ adjacent to the added vertex.
From  \ref{it:P2}, one has
\begin{align}
\label{eq:moveB-Lemma2app}
\begin{split}
c_{x,n-1}(\ell_1) &= 
(-1)^{|\Delta^+|}\sum_{w\in W} (-1)^{\ell(w)}\, c_{x,n}\left( \ell_1 + 2 w(\rho)\right) \\
&= \sum_{w\in W} c_{1_W,1}\left( -2w(\rho)\right) c_{x,n}\left( \ell_1 + 2 w(\rho)\right).
\end{split}
\end{align}
The second identity follows from Lemma \ref{lem:ellw'} and \eqref{eq:Px1}, 
 which together imply 
\[
(-1)^{|\Delta^+| } (-1)^{\ell(w)} = (-1)^{\ell(\iota w)} = c_{1_W,1}\left( -2w(\rho)\right) \qquad \mbox{for $w\in W$.}
\]
Multiplying both sides of \eqref{eq:moveB-Lemma2app} by the contributions corresponding to the remaining vertices of $\Gamma$, one has
\begin{equation*}
\prod_{v\in V(\Gamma)} \left[ P_{\xi_v, \deg v}(z_v) \right]_{\ell_v}
= 
\sum_{w\in W} 
\prod_{v\in V(\Gamma_\circ)} \left[ P_{\xi_\circ(v), \deg v}(z_v) \right]_{R_w(\ell)_v}.
\end{equation*}
I.e., one has
\begin{equation}
\label{eq:f=sumf}
c_{\Gamma, \xi}(\ell) = \sum_{w\in W}c_{\Gamma_\circ, \xi_\circ}(R_w(\ell)).
\end{equation}

Next, we consider the powers of $q$. For $w\in W$, a direct computation shows that 
\[
B^{-1}
\ell
 =
\left(h_\sharp,  h_1\right)
\quad \Rightarrow \quad 
B_\circ^{-1}
R_w(\ell)
=
\left(h_\sharp, h_1,h_1+2w(\rho) \right).
\]
This implies that 
\begin{align*}
\langle R_w(\ell), R_w(\ell) \rangle &= 
\left( \ell_\sharp , \ell_1+2w(\rho), -2w(\rho) \right)^t
\left(h_\sharp, h_1, h_1+2w(\rho)\right)\\
&=\langle \ell, \ell \rangle -4 \langle \rho, \rho \rangle.
\end{align*}
Thus from \eqref{eq:f=sumf}, we have
\begin{equation}
\label{eq:eqcontB-}
c_{\Gamma, \xi}(\ell) \, q^{-\frac{1}{8}\langle \ell, \ell\rangle}
= 
q^{-\frac{1}{2}\langle \rho,\rho\rangle}
\sum_{w\in W}c_{\Gamma_\circ, \xi_\circ}(R_w(\ell))\,
q^{-\frac{1}{8}\langle R_w(\ell), R_w(\ell)\rangle}.
\end{equation}
The factor $q^{-\frac{1}{2}\langle \rho,\rho\rangle}$ on the right-hand side matches the extra contribution to the factor \eqref{eq:frontfactor}
 in front of the sum in the series for $\Gamma_\circ$.
We conclude that the contribution of $\ell$ in the series for $\Gamma$ equals the sum over $w\in W$ of the contributions of $R_w(\ell)$ in the series for $\Gamma_\circ$.
Since the maps $R_w$ for $w\in W$ are injective, the statement for this move follows.

\bigskip

\noindent \textit{Step (B$+$): The Neumann move (B$+$) from Figure \ref{fig:Neumann}.}
In this case, one has
\[
\sigma_\circ=1+\sigma, \qquad \pi_\circ = 1+ \pi, \qquad 3\sigma_\circ - \mathrm{tr}\, B_\circ = 1+ 3\sigma - \mathrm{tr}\, B. 
\]
We conclude that the factor \eqref{eq:frontfactor} in front of the sum in the series for $\Gamma_\circ$ has an extra factor $(-1)^{|\Delta^+|}q^{\frac{1}{2}\langle \rho, \rho\rangle}$.

For a choice of $w\in W$, consider the function 
\begin{equation}
\label{eq:RwB+}
R_w\colon L'\otimes_{\mathbb{Z}} Q \rightarrow L'_\circ \otimes_{\mathbb{Z}} Q, \qquad
(a_\sharp, a_1)\mapsto (a_\sharp, a_1+2w(\rho),  2 w(\rho)).
\end{equation}
For $a\in \delta +2L' \otimes_{\mathbb{Z}} Q$, define 
$a_\circ:= R(a)\in \delta_\circ +2L'_\circ \otimes_{\mathbb{Z}} Q$ where $R=R_w$ with $w=1_W$.

As with the previous move, we can restrict the sum in the series for $\Gamma_\circ$ over only those $\ell_\circ$ which are of type $\ell_\circ= R_w(\ell)$ for some $\ell\in a + 2B L\otimes_{\mathbb{Z}} Q$ and some $w\in W$. Also, let $n$ be the degree of the vertex in $\Gamma_\circ$ adjacent to the added vertex. 
As with the previous move, the assumption that this Neumann move does not create a reducible vertex implies $n\neq 3$, and thus there are no new forcing bridges.
Hence the sets of Weyl assignments for the two plumbing trees are isomorphic.

For $\ell\in a + 2B L\otimes_{\mathbb{Z}} Q$, write $\ell=(\ell_\sharp, \ell_1)$. Select a Weyl assignment $\xi$ on $\Gamma$, let $\xi_\circ$ be the corresponding Weyl assignment on $\Gamma_\circ$, and let $x\in W$ be the value of $\xi$ at the vertex in $\Gamma_\circ$ adjacent to the added vertex.
From  \ref{it:P2}, one has
\begin{align}
\label{eq:moveB+Lemma2app}
\begin{split}
c_{x,n-1}(\ell_1) &= 
(-1)^{|\Delta^+|}\sum_{w\in W} (-1)^{\ell(w)}\, c_{x,n}\left( \ell_1 + 2 w(\rho)\right) \\
&= (-1)^{|\Delta^+|} \sum_{w\in W} c_{1_W,1}\left( 2w(\rho)\right) c_{x,n}\left( \ell_1 + 2 w(\rho)\right).
\end{split}
\end{align}
The second identity follows from \eqref{eq:Px1}. 
Multiplying both sides of \eqref{eq:moveB+Lemma2app} by the contributions corresponding to the remaining vertices of $\Gamma$, one has
\begin{equation*}
\prod_{v\in V(\Gamma)} \left[ P_{\xi_v, \deg v}(z_v) \right]_{\ell_v}
=  (-1)^{|\Delta^+|}
\sum_{w\in W} 
\prod_{v\in V(\Gamma_\circ)} \left[ P_{\xi_\circ(v), \deg v}(z_v) \right]_{R_w(\ell)_v}.
\end{equation*}
This is
\begin{equation}
\label{eq:f=sumfB+}
c_{\Gamma, \xi}(\ell) =  (-1)^{|\Delta^+|} \sum_{w\in W}c_{\Gamma_\circ, \xi_\circ}(R_w(\ell)).
\end{equation}

For $w\in W$, a direct computation shows that 
\[
B^{-1}
\ell
 =
\left(h_\sharp,  h_1\right)
\quad \Rightarrow \quad 
B_\circ^{-1}
R_w(\ell)
=
\left(h_\sharp, h_1,2w(\rho) -h_1 \right).
\]
This implies that 
\begin{align*}
\langle R_w(\ell), R_w(\ell) \rangle &= 
\left( \ell_\sharp , \ell_1+2w(\rho), 2w(\rho) \right)^t
\left(h_\sharp, h_1, 2w(\rho) -h_1\right)\\
&=\langle \ell, \ell \rangle +4 \langle \rho, \rho \rangle.
\end{align*}
Thus from \eqref{eq:f=sumfB+}, we have
\begin{align}
\begin{split}
\label{eq:conteqB+}
&(-1)^{|\Delta^+|\, \pi}
c_{\Gamma, \xi}(\ell) \, q^{-\frac{1}{8}\langle \ell, \ell\rangle}\\
&= 
(-1)^{|\Delta^+|\, \pi_\circ}
q^{\frac{1}{2}\langle \rho,\rho\rangle}
\sum_{w\in W}c_{\Gamma_\circ, \xi_\circ}(R_w(\ell))\,
q^{-\frac{1}{8}\langle R_w(\ell), R_w(\ell)\rangle}.
\end{split}
\end{align}
The factor $q^{\frac{1}{2}\langle \rho,\rho\rangle}$ on the right-hand side matches the extra contribution to $q$ in the factor \eqref{eq:frontfactor}
 in front of the sum in the series for $\Gamma_\circ$.
We conclude that the contribution of $\ell$ in the series for $\Gamma$ equals the sum over $w\in W$ of the contributions of $R_w(\ell)$ in the series for $\Gamma_\circ$.
Since the maps $R_w$ for $w\in W$ are injective, the statement for this move follows.

\bigskip

\noindent \textit{Step (C): The Neumann move (C) from Figure \ref{fig:Neumann}.}
In this case, one has
\[
\sigma_\circ=\sigma, \qquad \pi_\circ = 1+ \pi, \qquad 3\sigma_\circ - \mathrm{tr}\, B_\circ = 3\sigma - \mathrm{tr}\, B. 
\]
We conclude that the factor \eqref{eq:frontfactor} in front of the sum in the series for $\Gamma_\circ$ has an extra factor $(-1)^{|\Delta^+|}$.
 
Recall the function $R_\beta$ with $\beta\in Q$ from \eqref{eq:RaC}:
\begin{equation*}
R_\beta\colon L'\otimes_{\mathbb{Z}} Q \rightarrow L'_\circ \otimes_{\mathbb{Z}} Q, \qquad
(a_\sharp, a_0, a_\flat)\mapsto (a_\sharp,  a_0+\beta, 0, \beta, -a_\flat)
\end{equation*}
 where the entry $a_0$ corresponds to the vertex in $\Gamma$ labelled by $m_1+m_2$, the entries $a_0+\beta$, $0$, and $\beta$ correspond to the vertices in $\Gamma_\circ$ labelled by $m_1$, $0$, and $m_2$, respectively, and the subtuples $a_\sharp$ and $a_\flat$ correspond to all the vertices in $\Gamma_\circ$ on their left and right, respectively.

For $a\in \delta +2L' \otimes_{\mathbb{Z}} Q$, define $a_\circ\in \delta_\circ +2L'_\circ \otimes_{\mathbb{Z}} Q$ as
\[
a_\circ:= \left\{
 \begin{array}{ll}
R_{2\rho}(a) & \mbox{if $\deg(v_1)\equiv \deg(v_2)$ mod $2$,}\\[5pt]
R_{0}(a) & \mbox{otherwise.}
 \end{array}
\right.
\]
This is as in \eqref{eq:gammadef}.

As the  vertex labelled by $0$ in $\Gamma_\circ$ has degree $2$, from \ref{it:P1} we deduce that for $\ell_\circ\in a_\circ + 2B_\circ L_\circ\otimes_{\mathbb{Z}} Q$,
one has $c_{\Gamma_\circ, \xi_\circ}(\ell_\circ)=0$ when $\ell_\circ$ has a non-zero  component corresponding to the vertex of $\Gamma_\circ$ labelled by $0$. Hence, we can restrict the sum in the series for $\Gamma_\circ$ over only those $\ell_\circ$ which are of type $\ell_\circ= R_\beta(\ell)$ for some $\ell\in a + 2B L\otimes_{\mathbb{Z}} Q$ and some $\beta\in Q$. Note that for $\ell\in a + 2B L\otimes_{\mathbb{Z}} Q$ and $\beta\in Q$, one has
\[
R_\beta(\ell) \in a_\circ + 2B_\circ L_\circ\otimes_{\mathbb{Z}} Q
\]
if and only if $\beta\in \beta_0 +2Q$ with $\beta_0$ defined as in \eqref{eq:gammadef}.

Select a  Weyl assignment $\xi$ for $\Gamma$, and define a  Weyl assignment $\xi_\circ$ for $\Gamma_\circ$ such that, 
for a vertex $v$ with $\deg v\geq 3$, one has
\begin{equation}
\label{eq:identifXiC}
\xi_\circ \colon v \mapsto \left\{
\begin{array}{ll}
\xi_v & \mbox{if $v$ is on the left of $v_1$,}\\[0.5pt]
\iota\xi_v & \mbox{if $v$ is on the right of $v_2$.} 
\end{array}
\right.
\end{equation}
Here $\iota$ is as in \eqref{eq:iota}, and  $v_1$ and $v_2$ are the vertices labelled by $m_1$ and $m_2$ in $\Gamma_\circ$.
Moreover, define
\[
\xi_{\circ, v_1} := \left\{
\begin{array}{ll}
\xi_{v_0} & \mbox{if $\deg(v_1)\geq 3$,}\\[5pt]
 1_W & \mbox{if $\deg(v_1)\leq 2$,}
\end{array}
\right.
\]
and
\[
\xi_{\circ, v_2} :=\left\{
\begin{array}{ll}
 \iota \xi_{v_0} & \mbox{if $\deg(v_2)\geq 3$,}\\[5pt]
 1_W & \mbox{if $\deg(v_2)\leq 2$,}
\end{array}
 \right.
\]
where $v_0$ is  the vertex labelled by $m_1+m_2$ in $\Gamma$.
The value of $\xi_\circ$ at the  vertex labelled by $0$ in $\Gamma_\circ$ is $1_W$, as determined by \eqref{eq:Wassconditiondeg012}.
The map $\xi\mapsto \xi_\circ$ is the natural isomorphism of the sets of Weyl assignments for $\Gamma$ and $\Gamma_\circ$.

Let $p$ and $q$ be the degrees of the vertices $v_1$ and $v_2$ in $\Gamma_\circ$, respectively. Then the degree of the vertex $v_0$ in $\Gamma$ is $p+q-2$. 
Recall that we are assuming that the Neumann move does not create a new reducible vertex.
Moreover, when $p,q\geq 3$, the tree $\Gamma_\circ$ has one more forcing bridge with respect to $\Gamma$, and the definition of $\xi_\circ$ is compatible with the condition \eqref{eq:forcingbridgescondition}.

For $\ell\in a + 2B L\otimes_{\mathbb{Z}} Q$, write $\ell=(\ell_\sharp, \ell_0, \ell_\flat)$.
Let $x:= \xi(v_0)\in W$.
From \ref{it:P4}, one has
\[
c_{x,p+q-2}(\ell_0) = \sum_{\beta\in \beta_0+2Q} c_{x,p}\left( \ell_0 + \beta\right) c_{x,q}\left( -\beta\right).
\]
Applying \ref{it:P3}, one has
\begin{equation}
\label{eq:Ctempid}
c_{x,p+q-2}(\ell_0) = (-1)^{|\Delta^+|\, q}\sum_{\beta\in \beta_0+2Q} c_{x,p}\left( \ell_0 + \beta\right) c_{\iota x,q}\left(\beta\right).
\end{equation}
Let $V_\flat(\Gamma_\circ)$ be the set of all vertices of $\Gamma_\circ$ on the right of the vertex labelled by $m_2$.
Applying \ref{it:P3} to all contributions corresponding to vertices in $V_\flat(\Gamma_\circ)$ and using that $q+\sum_{v\in V_\flat(\Gamma_\circ)} \deg v$ is odd (this is as in \eqref{eq:oddsumdegv}), one has
\begin{multline*}
(-1)^{|\Delta^+|\, q} c_{\iota x,q}\left(\beta\right) \prod_{v\in \mathrm{V_\flat(\Gamma_\circ)}} c_{\xi_v,\deg v}(\ell_v) \\
=
(-1)^{|\Delta^+|} c_{\iota x,q}\left(\beta\right) \prod_{v\in \mathrm{V_\flat(\Gamma_\circ)}} c_{\iota \xi_v,\deg v}(-\ell_v).
\end{multline*}
Multiplying both sides of \eqref{eq:Ctempid} by the contributions corresponding to the remaining vertices, using \ref{it:P1} and the last identity, one has
\begin{equation}
\label{eq:xif=sumxifC}
\prod_{v\in V(\Gamma)} \left[ P_{\xi_v, \deg v}(z_v) \right]_{\ell_v}
= (-1)^{|\Delta^+|}
\sum_{\beta\in \beta_0+2Q} 
\prod_{v\in V(\Gamma_\circ)} \left[ P_{\xi_\circ(v), \deg v}(z_v) \right]_{R_\beta(\ell)_v}.
\end{equation}
This is
\begin{equation}
\label{eq:f=sumfC}
c_{\Gamma, \xi}(\ell) =  (-1)^{|\Delta^+|} \sum_{\beta\in \beta_0+2Q}c_{\Gamma_\circ, \xi_\circ}(R_\beta(\ell)).
\end{equation}
The factor $(-1)^{|\Delta^+|}$ on the right-hand side matches the extra contribution to the factor \eqref{eq:frontfactor}
 in front of the sum in the series for $\Gamma_\circ$.
 
For $\beta\in Q$, a direct computation shows that 
\[
B^{-1}
\ell
 =
\left(h_\sharp,  h_0, h_\flat\right)
\quad \Rightarrow \quad 
B_\circ^{-1}
R_\beta(\ell)
=
\left(h_\sharp, h_0, h'_0, -h_0, -h_\flat \right)
\]
for some $h'_0\in Q$.
This implies that 
\begin{equation}
\label{eq:RRllC}
\langle R_\beta(\ell), R_\beta(\ell) \rangle = 
(\ell_\sharp,  \ell_0+\beta, 0, \beta, -\ell_\flat)^t
\left(h_\sharp, h_0, h'_0, -h_0, -h_\flat \right)
=\langle \ell, \ell \rangle .
\end{equation}

We conclude that 
\begin{equation}
\label{eq:conteqC}
(-1)^{|\Delta^+|\, \pi}
c_{\Gamma,\xi}(\ell) 
q^{-\frac{1}{8}\langle \ell, \ell\rangle}=  
(-1)^{|\Delta^+|\, \pi_\circ} \sum_{\beta\in \beta_0+2Q}c_{\Gamma_\circ, \xi_\circ}(R_\beta(\ell))
q^{-\frac{1}{8}\langle R_\beta(\ell), R_\beta(\ell)\rangle}.
\end{equation}
Hence the contribution of $\ell$ in the series for $\Gamma$ equals the sum over $\beta\in \beta_0+2Q$ of the contributions of $R_\beta(\ell)$ in the series for $\Gamma_\circ$.
Since the maps $R_\beta$ are injective for all $\beta$, the statement for this move follows. 

This concludes the proof.
\end{proof}


\section{Strong characterization of admissibility and symmetry}
\label{sec:strong}

Here we prove a strong characterization of the admissible and symmetric collections and conclude with the proof of Theorem  \ref{thm:finalthmintro}. Note that when $\Gamma$ fails to be reduced and refinable, the series $\mathsf{Y}\left(q\right)=\mathsf{Y}_{\mathcal{P},S,a}\left(q\right)$ may fail to exist, i.e., $\mathsf{Y}\left(q\right)=\mathsf{Y}_{\mathcal{P},S,a}\left(q\right)$ may not be well-defined (see \cite[\S 4.2]{MT2}).

\begin{theorem}
\label{thm:qseriesinvariance}
When it exists,
the series $\mathsf{Y}\left(q\right)=\mathsf{Y}_{\mathcal{P},S,a}\left(q\right)$ obtained from a collection $\mathcal{P}$ satisfying \eqref{eq:Px1} and a subset $S\subseteq W^{V(\Gamma)}$ is:
\begin{enumerate}[(i)]
\item  invariant under the five Neumann moves amongst reduced plumbing trees if and only if $\mathcal{P}$ is  admissible  and $S\subseteq \Xi$, and 

\item additionally invariant under the action of the Weyl group $W$ on $a$, i.e.,
\[
\mathsf{Y}_{\mathcal{P},S,a}\left(q\right)=\mathsf{Y}_{\mathcal{P},w(S),w(a)}\left(q\right), \quad \mbox{for $w\in W$,}
\]
if and only if $\mathcal{P}$ is  admissible and symmetric and $S=\Xi$.
\end{enumerate}
\end{theorem}

\begin{proof}
After Theorems \ref{thm:seriesinvariancef} and \ref{thm:Winvqseries},
it remains to prove the ``only if'' statements.
For the ``only if'' in part (i), assume $\mathsf{Y}\left(q\right)$ is invariant under the five Neumann moves amongst reduced plumbing trees. 
We prove that the collection $\mathcal{P}$ is necessarily {admissible}, i.e., $\mathcal{P}$ satisfies properties \ref{it:P1}--\ref{it:P4}.
We use notation as in the proof of Theorem \ref{thm:seriesinvariancef}. 

For each Neumann move  in Figure \ref{fig:Neumann}, the vertex set of $\Gamma$ is naturally identified with a subset of the vertex set of $\Gamma_\circ$.
The bijection of the generalized $\mathrm{Spin}^c$-structures from \eqref{eq:isoBQM} is induced 
by the map $R$ defined in \eqref{eq:Ra}--\eqref{eq:gammadef}, see Proposition \ref{prop:BQMinvariance2}.
Depending on the Neumann move, $R$ might entail a minus sign on the entries corresponding to the right portion of $\Gamma_\circ$. 
The map $R$ induces a map $\xi\mapsto \xi_\circ$ 
such that $\xi_\circ(v)=\xi(v)$ if $R(\ell)_v=\ell_v$, while $\xi_\circ(v)=\iota\xi(v)$ if $R(\ell)_v=-\ell_v$.
Thus the map $\xi\mapsto \xi_\circ$ is as in \eqref{eq:identifXiA+} and \eqref{eq:identifXiC}.
The value of $\xi_\circ$ at a new vertex of $\Gamma_\circ$ of degree $1$ or $2$ could possibly be arbitrary.

Assume $\mathsf{Y}\left(q\right)$ is invariant under the Neumann move (A$-$). 
An element $\ell_\circ$ in the image of the map $R$ in \eqref{eq:Ra} has component $0$ corresponding to the added vertex $v_0$ in $\Gamma_\circ$.
From the definition of the coefficient $c_{\Gamma, \xi}(\ell)$ in \eqref{eq:cGammaxiell}, one deduces 
\[
\left[ P_{\xi_\circ(v), 2}(z_v)\right]_0 = 1
\]
for $v=v_0$. After possibly removing contributions that sum to zero, it follows that \ref{it:P1} holds.

Next, assume $\mathsf{Y}\left(q\right)$ is also invariant under the Neumann move (A$+$). 
Then necessarily, one has
\begin{equation*}
c_{\Gamma, \xi}(\ell) = (-1)^{|\Delta^+|} c_{\Gamma_\circ, \xi_\circ}(R(\ell))
\end{equation*}
as in \eqref{eq:fgammaB+}. Let $v_2$ be the vertex labelled by $m_2$ in $\Gamma$. 
By induction on the degree of  $v_2$ across all plumbing trees $\Gamma$, one deduces that 
\begin{equation*}
c_{x,n}(\alpha) = (-1)^{|\Delta^+|\,n} c_{\iota x,n}(-\alpha)  \qquad \mbox{for $\alpha\in Q$}
\end{equation*}
where $x=\xi_v$ and $n=\deg v$ for $v$ equal to $v_2$ or a vertex on its right. Hence \ref{it:P3} holds.

Assume $\mathsf{Y}\left(q\right)$ is invariant under the Neumann move (B$-$). 
From the assumption \eqref{eq:Px1}, the sum in the series for $\Gamma_\circ$ is over elements 
$\ell_\circ$ whose component corresponding to the added vertex $v_0$ is in the orbit $-2W(\rho)$, 
i.e., $\ell_\circ$ is in the image of  $R_w$ for some $w\in W$ as in \eqref{eq:RwB-}, up to removing contributions that sum to zero. 
As in \eqref{eq:moveB-Lemma2app} and using the same notation as in there, one then has
\begin{align*}
c_{x,n-1}(\ell_1) = \sum_{w\in W} c_{\xi_\circ(v_0),1}\left( -2w(\rho)\right) c_{x,n}\left( \ell_1 + 2 w(\rho)\right).
\end{align*}
Together with the assumption \eqref{eq:Px1}, this implies \ref{it:P2}.

Finally, assume $\mathsf{Y}\left(q\right)$ is also invariant under the Neumann move (C). 
Then necessarily, one has
\begin{equation*}
c_{\Gamma, \xi}(\ell) =  (-1)^{|\Delta^+|} \sum_{\beta\in \beta_0+2Q}c_{\Gamma_\circ, \xi_\circ}(R_\beta(\ell))
\end{equation*}
as in \eqref{eq:f=sumfC}. By induction on the degree of the vertex labelled by $m_2$ in $\Gamma_\circ$ and combining with \ref{it:P3}, now verified, one deduces that 
\[
c_{x,p+q-2}(\ell_0) = \sum_{\beta\in \beta_0+2Q} c_{x,p}\left( \ell_0 + \beta\right) c_{x,q}\left( -\beta\right)
\]
hence \ref{it:P4} holds.

\smallskip

Next, we show that $S\subseteq \Xi$.
Assume instead $W^s\supseteq S\supsetneq \Xi$. 
From Theorem \ref{thm:adm}, one has
\begin{equation*}
\{P_{x,n}(z) \,|\, x\in W,\, n\geq 0\} \subseteq \{K_{w,n}(z) \,|\, w\in W,\, n\geq 0\}.
\end{equation*}
Since $P_{x,n}(z)$ is independent of $x$ for $n\in\{0,1,2\}$,
after possibly replacing $S$ without affecting the series, we can assume that \eqref{eq:Wassconditiondeg012} holds.
Then the assumption $S\supsetneq \Xi$ implies that for a plumbing tree $\Gamma$ there exists a forcing bridge between vertices $u$ and $v$ and $\xi\in S$ such that the values $\xi_u$ and $\xi_v$ are not coordinated as in \eqref{eq:forcingbridgescondition}. 
Then contracting this forcing bridge via a sequence  of the Neumann moves results in the evaluation of a product $K_{x,n}(z)K_{y,m}(z)$ with $x\neq y$ and $n,m\geq 3$. Since the product $K_{x,n}(z)K_{y,m}(z)$ with $n,m\geq 3$ is defined only for $x=y$, this leads to a contradiction. It follows that necessarily $S\subseteq\Xi$, hence part~(i).

\smallskip

Next, we prove that if $\mathsf{Y}\left(q\right)$ is also invariant under the action of $W$, then $\mathcal{P}$ is necessarily symmetric, i.e., $\mathcal{P}$ satisfies \ref{it:P5}.
The case $n=2$ is trivially satisfied by \ref{it:P2}, now verified, and the case $n=1$ follows from the assumption \eqref{eq:Px1}. Similarly, the case $n=0$ follows from the fact that $P_{x,0}(z)$ is uniquely determined as in Remark \ref{rmk:n012}.
Also, as property \ref{it:P4} implies $P_{x,n}(z)= \left(P_{x,3}(z) \right)^{n-2}$ for $n>3$, it remains to discuss the case $n=3$. 
This follows from the invariance under $W$ of the series $\mathsf{Y}_{a}\left(q\right)$ for plumbing trees with exactly one vertex of degree $3$.
Hence \ref{it:P5} holds.

\smallskip

It remains to verify that necessarily $S=\Xi$. 
From Theorem \ref{thm:uniquenessadm}, it follows that $\mathcal{P}=\mathcal{K}$.
Then assuming $S\subsetneq \Xi$ contradicts the invariance of the series $\mathsf{Y}\left(q\right)$ under $W$ (unless the series for $S$ and $\Xi$ coincide).
It follows that the series is obtained from the case $S=\Xi$,  hence part (ii).
\end{proof}

\begin{proof}[Proof of Theorem \ref{thm:finalthmintro}]
The statements follow from Theorems \ref{thm:qseriesinvariance} and~\ref{thm:uniquenessadm}.
\end{proof}

\begin{remark}
As in Remark \ref{rmk:admissibleAJK}, it is interesting to compare this statement with the results from \cite{akhmechet2023lattice}: 
To obtain invariance only under the Neumann moves between negative-definite plumbing trees, i.e., the Neumann move (A$-$) and (B$-$), one only requires that the collections $\mathcal{P}$ with \eqref{eq:Px1} satisfy  \ref{it:P1} and \ref{it:P2}. In this context, there are infinitely many such collections which additionally satisfy \ref{it:P3}, yielding infinitely many invariant series for the negative-definite case. However, to obtain invariance under the Neumann move (C) imposes the additional condition \ref{it:P4} on the collections $\mathcal{P}$, yielding only finitely many invariant series for plumbing trees in general.
\end{remark}


\section{Examples}
\label{sec:examples}

We discuss here the series $\mathsf{Y}\left(q\right)$ in the cases of plumbing trees with either one or two vertices of degree at least $3$.

From Theorem \ref{thm:qseriesinvariance}(i) and Remark \ref{rmk:P=K}, we may assume $\mathcal{P}=\mathcal{K}$, the Kostant collection from \S\ref{sec:Kz}. We will thus omit $\mathcal{P}$ from the notation.

\subsection{Brieskorn spheres}
We compute here the series $\mathsf{Y}_{S,a}\left(q\right)$ in the case of Brieskorn homology spheres for 
$S \subset \Xi$ of size one. 
We show that all such series are identical and equal to the series $\widehat{Z}_a(q)$ computed for $Q=A_1$ in \cite{gukov2021two} and for arbitrary $Q$ in \cite{park2020higher}. Thus the fact that their average recovers 
 $\widehat{Z}_a(q)$ is trivially satisfied in this case. 

For this, we use the fact that a Brieskorn homology sphere is realized as a negative-definite manifold constructed from a  star-shaped plumbing tree $\Gamma$ with central vertex of degree $3$. 
Since $\Gamma$ is negative definite, one has $\pi=0$ and $\sigma=-s$.

Select a root lattice $Q$. As $\Gamma$ has only one vertex of degree $\geq 3$, the set of Weyl assignments from \S\ref{sec:chamberass} is $\Xi\cong W$. Choose an order of the basis of $L'\otimes_{\mathbb{Z}} Q\cong Q^s$ so that for $f\in Q^s$ one writes
\begin{equation*}
f=(f_0, f_1, f_2, f_3, \dots)
\end{equation*}
with $f_0$ corresponding to the vertex of $\Gamma$ of degree $3$ and $f_1, f_2, f_3$ to the three vertices of degree $1$.

Since $H_1(M; Q)=0$, one has that $a=\delta$ from \eqref{eq:delta} is the unique generalized $\mathrm{Spin}^c$-structure. 
This is $\delta=(-1,1,1,1,0\dots,0)\otimes 2\rho$. We will omit it from the notation.

For a subset $S=\{x\}\subset W \cong\Xi$, the series from \eqref{eq:Ypa} is
\begin{equation}
\label{eq:YPqBrieskorn}
\mathsf{Y}_{x}(q) = 
q^{-\frac{1}{2}(3s+\mathrm{tr}\,B)\langle \rho, \rho \rangle} 
\sum_f c_{\Gamma,x} (f)\,q^{-\frac{1}{8}\langle f, f\rangle}
\end{equation}
where the sum is over  $f\in \delta + 2BQ^s\subset Q^s$. 
Recall the definition of the coefficients  $c_{\Gamma,x} (f)$ in \eqref{eq:cGammaxiell}.
Since $\Gamma$ is a star-shaped tree with three legs, 
the sum over $f$ in \eqref{eq:YPqBrieskorn} can be restricted to those $f$ which are of the form 
\begin{align*}
f=(\gamma, 2w_1(\rho), 2w_2(\rho), 2w_3(\rho), 0,\dots,0)\in Q^s \\
\mbox{with $\gamma\in 2\rho+2Q$ and $w_1,w_2,w_3\in W$}
\end{align*}
as $c_{\Gamma,x} (f)$ vanishes otherwise.
Next, we compute the contribution of each entry of $f$ to $c_{\Gamma,x} (f)$.
Writing $K(z)=\sum_{\alpha\in Q}d(\alpha)\,z^\alpha$, from \eqref{eq:Ktwist} one has
\[
K_x(z)=(-1)^{\ell(x)}\sum_{\alpha\in Q}d\left(x^{-1}\alpha\right)\,z^\alpha.
\]
Hence for a fixed $x\in S$, the contribution of $f_0=\gamma$ is $(-1)^{\ell(x)}\,d\left(x^{-1}\gamma\right)$.
Applying \eqref{eq:Px1}, the contribution of $f_i=2w_i(\rho)$ for $i=1,2,3$ is $(-1)^{\ell(w_i)}$. Multiplying these contributions, one has
\[
c_{\Gamma,x} (f)=(-1)^{\ell(xw_1w_2w_3)} \,d\left(x^{-1}\gamma\right).
\]
Thus the series  becomes
\[
\mathsf{Y}_{x}(q) = 
q^{-\frac{1}{2}(3s+\mathrm{tr}\,B)\langle \rho, \rho \rangle} 
\sum_f  (-1)^{\ell(xw_1w_2w_3)} d\left(x^{-1}\gamma\right)q^{-\frac{1}{8}\langle f, f\rangle}.
\]
By making use of the symmetry 
\[
f\mapsto x^{-1}f=\left(x^{-1}\gamma, 2x^{-1}w_1(\rho), 2x^{-1}w_2(\rho), 2x^{-1}w_3(\rho), 0,\dots,0\right)
\]
and the fact that $\langle f, f\rangle=\langle x^{-1}f, x^{-1}f\rangle$ for $x\in W$, the series can be rewritten as
\[
\mathsf{Y}_{x}(q) = 
q^{-\frac{1}{2}(3s+\mathrm{tr}\,B)\langle \rho, \rho \rangle} 
\mathop{\sum_{\gamma\in Q}}_{w_1, w_2, w_3\in W}  (-1)^{\ell(w_1w_2w_3)} d\left(\gamma\right)q^{-\frac{1}{8}\langle f, f\rangle}.
\]
This uses that $\ell(x)=\ell(x^{-1})$ and thus ${\ell(xw_1w_2w_3)}\equiv {\ell(x^{-1}w_1 x^{-1}w_2 x^{-1}w_3)}$ mod~$2$.

As a first observation, the right-hand side of this formula is independent of $x$. It follows that the series $\mathsf{Y}_{x}(q)$ for $x\in W$ are all equal. Moreover, this formula recovers the series $\widehat{Z}(q)$ computed for Brieskorn homology spheres in \cite{gukov2021two} and \cite{park2020higher}. 

The fact that all the series $\mathsf{Y}_{x}(q)$ are identical in this example is due to the symmetry of the Weyl group and the fact that $\Xi\cong W$ here. 
For more details on this symmetry, see the similar case treated in \S\ref{sec:Seifert}.
For an example where the series  $\mathsf{Y}_{x}(q)$ vary with $x\in \Xi$, see \S\ref{sec:nonSeifert}.

\begin{figure}[t]
\[
\begin{tikzpicture}[baseline={([yshift=0ex]current bounding box.center)}]
      \path(0,0) ellipse (2 and 2);
      \tikzstyle{level 1}=[counterclockwise from=90, level distance=40mm, sibling angle=120]
      \node [draw, circle, fill, inner sep=1.5, label={[label distance=10]30:$-1$}] (A0) at (0:0) {}
            child {node [draw, circle, fill, inner sep=1.5, label={[label distance=10]180:$-2$}]{}}
            child {node [draw, circle, fill, inner sep=1.5, label={[label distance=10]180:$-3$}]{}}
	    child {node [draw, circle, fill, inner sep=1.5, label={[label distance=10]0:$-7$}]{}};
    \end{tikzpicture}
\]
\caption{The Brieskorn sphere $\Sigma(2,3,7)$.}
\label{fig:S237}
\end{figure}

\subsection{The Brieskorn sphere $\Sigma(2,3,7)$} Here we consider the case $M=\Sigma(2,3,7)$ in detail. 
This is obtained from the negative-definite plumbing tree in Figure \ref{fig:S237}.
For $Q=A_1$, one has
\[
\mathsf{Y}_{x}(q) = q^{1/2}\left(1 -q -q^5 +q^{10} -q^{11} +q^{18} +q^{30} -q^{41} + q^{43} -q^{56} + O(q^{76})\right) 
\]
and for $Q=A_2$, one has
\[
\mathsf{Y}_{x}(q) = q^2\left(1 -2q +2q^3 +q^4 -2q^5 -2q^8 +4q^9 +2q^{10} -4q^{11} +O(q^{13})\right).
\]
Both formulae are independent of $x\in \Xi$ and recover the series $\widehat{Z}(q)$ computed  in \cite{gukov2021two} and \cite{park2020higher}, respectively.

\subsection{Seifert manifolds}
\label{sec:Seifert}
Here we consider the case when $M$ is more generally a Seifert manifold. In this case, the plumbing tree $\Gamma$ consists of a star-shaped tree. Furthermore, we assume that (the class of) $a$ is invariant under $W$ 
(e.g., when $a$ is the unique self-conjugate $\mathrm{Spin}^c$-structure on $M$). 

As for Brieskorn spheres, $\Xi\cong W$. From \eqref{eq:Wxwa} and the assumption that $w(a)=a$ for all $w\in W$,
it follows that the series $\mathsf{Y}_{x,a}(q)$ for all $x\in W$ are identical and thus equal to $\widehat{Z}_a(q)$, as for Brieskorn spheres.

\begin{figure}[t]
\[
\begin{tikzpicture}[baseline={([yshift=0ex]current bounding box.center)}]
      \path(0,0) ellipse (2 and 2);
      \tikzstyle{level 1}=[counterclockwise from=135, level distance=40mm, sibling angle=90]
      \node [draw, circle, fill, inner sep=1.5, label={[label distance=10]60:$-2$}] (A1) at (180:2) {}
            child {node [draw, circle, fill, inner sep=1.5, label={[label distance=10]180:$-2$}]{}}
	    child {node [draw, circle, fill, inner sep=1.5, label={[label distance=10]180:$-3$}]{}};
      \tikzstyle{level 1}=[counterclockwise from=45,level distance=40mm,sibling angle=-90]
      \node [draw, circle,fill, inner sep=1.5, label={[label distance=10]-120:$-2$}] (A2) at (0:2) {}
	   child {node [draw, circle, fill, inner sep=1.5, label={[label distance=10]0:$-2$}]{}}
	    child {node [draw, circle, fill, inner sep=1.5, label={[label distance=10]0:$-3$}]{}};
      \tikzstyle{level 1}=[counterclockwise from=150,level distance=15mm,sibling angle=30]
      \path (A1) edge []  node[midway, label={[label distance=10]-90:}]{} (A2);
    \end{tikzpicture}
\]
\caption{A non-Seifert manifold.}
\label{fig:nonSeifert}
\end{figure}

\subsection{A non-Seifert manifold}
\label{sec:nonSeifert}
Here we consider the 3-manifold $M$ obtained from the negative-definite plumbing tree in Figure \ref{fig:nonSeifert}.

One has $H_1(M; \mathbb{Z})=\mathbb{Z}_{13}$. 
For simplicity, we do not separate between the representatives of the various $\mathrm{Spin}^c$-structures, i.e., we consider the series 
\begin{align*}
\widehat{Z}_{\mathrm{tot}}(q) &:= \sum_{a\in\mathrm{Spin}^c(M) }\widehat{Z}_{a}(q), &
\mathsf{Y}_{S, \mathrm{tot}}\left(q\right) &:= \sum_{a\in\mathrm{Spin}^c(M) } \mathsf{Y}_{S,a}\left(q\right).
\end{align*}

For $Q=A_1$, the first few terms of the series $\widehat{Z}_{\mathrm{tot}}(q)$ for $M$ have been computed in \cite[(3.161)]{gukov2020bps}:
\begin{eqnarray*}
\widehat{Z}_{\mathrm{tot}}(q) &= & \frac{1}{4}\Bigg(
q^{-1/2}\left(2-2q+2q^2  \right)
+2q^{5/26}\left(-3+2q +2q^2 -4q^3\right)\\
&& +2q^{7/26}\left(4 +q  \right)
+2q^{-7/26}\left(-3 -3q^2  \right)\\
&&+2q^{-11/26}\left(-1 +2q -2q^2+4q^3 \right)
+2q^{-5/26}\left(2 +2q^2 -q^3 \right)\\
&&+2q^{-15/26}\left( -1 -2q+ 2q^2  \right) + O\left(q^{85/26}\right)
\Bigg).
\end{eqnarray*}
Here $O(q^x)$ stands for $q^x$ times a series in non-negative powers of $q$. (The series is approximated to a higher degree in \cite{gukov2020bps}. Due to an evident typo,  the factors $+2$ multiplying the rational powers of $q$, including the sign, are missing in \cite{gukov2020bps}.)

The series $\widehat{Z}_{\mathrm{tot}}(q)$ decomposes as the average of the series $\mathsf{Y}_{S, \mathrm{tot}}\left(q\right)$ for 
$S \subset \Xi$ of size one. 
As $\Gamma$ has exactly two vertices of degree $\geq 3$ and no forcing bridges, the set of Weyl assignments from \S\ref{sec:chamberass} is $\Xi\cong W^2$. Choose an order of the vertices of $\Gamma$ and write $\xi\in \Xi$ as $\xi=(x,y)$ with $x$ and $y$ being the assignment of $\xi$ to the left and right vertices of degree $3$, respectively. 

For $Q=A_1$, one has $\xi\in \{\pm 1\}^2$.
From \eqref{eq:Wxwa}, one has 
\begin{align*}
\mathsf{Y}_{(1,1), \mathrm{tot}}\left(q\right)&=\mathsf{Y}_{(-1,-1), \mathrm{tot}}\left(q\right), & \mathsf{Y}_{(1,-1), \mathrm{tot}}\left(q\right)&=\mathsf{Y}_{(-1,1), \mathrm{tot}}\left(q\right).
\end{align*}
A direct computation yields
\begin{eqnarray*}
\mathsf{Y}_{(1,1), \mathrm{tot}}(q) &= & 
q^{-1/2}\left(1+q^2  \right)
+q^{5/26}\left(-2 \right)
+q^{-11/26}\left( 2q^3 \right)
+q^{-5/26}\left(-2q \right)\\
&&+q^{-15/26}\left(  2q^3 \right) 
+ O\left(q^{85/26}\right)
\end{eqnarray*}
and
\begin{eqnarray*}
\mathsf{Y}_{(1,-1), \mathrm{tot}}(q) &= & 
q^{-1/2}\left(-q \right)
+q^{5/26}\left(-1+2q +2q^2 -4q^3\right)\\
&& +q^{7/26}\left(4 +q  \right)
+q^{-7/26}\left(-3 -3q^2  \right)\\
&&+q^{-11/26}\left(-1 +2q -2q^2+2q^3 \right)\\
&&+q^{-5/26}\left(2 +2q+2q^2 -q^3 \right)\\
&&+q^{-15/26}\left( -1 -2q+ 2q^2 -2q^3 \right) + O\left(q^{85/26}\right).
\end{eqnarray*}
These series satisfy the expected identity
\[
\widehat{Z}_{\mathrm{tot}}(q) = \frac{1}{4}\left( \mathsf{Y}_{(1,1), \mathrm{tot}}(q) + \mathsf{Y}_{(1,-1), \mathrm{tot}}(q) + \mathsf{Y}_{(-1,1), \mathrm{tot}}(q) + \mathsf{Y}_{(-1,-1), \mathrm{tot}}(q) \right).
\]


\section*{Acknowledgments} 
The authors' interest on invariant series for $3$-manifolds was sparked by \cite{gukov2021two, akhmechet2023lattice, park2020higher}.  
They thank Sergei Gukov for an email correspondence on this topic and Louisa Liles for related discussions. Additional thanks to Pavel Putrov, Slava Krushkal, and Ross Akhmechet for conversations and correspondences on related topics.
The idea to use reduced plumbing trees is borrowed from \cite{ri2023refined}, and the Weyl assignments from \S\ref{sec:chamberass} were inspired by a similar concept in the $A_1$ case that originated from there.
During the preparation of this manuscript,
AHM was partially supported by the NSF award DMS-2204148, and
NT was partially support by NSF award DMS-2404896 and a Simons Foundation's Travel Support for Mathematicians gift.

\bibliographystyle{alphanumN}
\bibliography{Biblio}

\end{document}